\newcounter{Def}[section]
\theoremstyle{plain}
\newtheorem{Proposition}[Def]{Proposition}
\newtheorem{Remark}[Def]{Remark}
\newtheorem{Lemma}[Def]{Lemma}
\newtheorem{Theorem}[Def] {Theorem}
\newtheorem{Kor}[Def] {Corollary}
\newtheorem{Definition}[Def]{Definition}
\newtheorem{Def/Not}[Def]{Definition/Notation}
\def\Aut              {{\rm Aut}}
\def\ZZ          {\mathbb Z}
\def\calc          {{\mathcal C}}
\def\KK           {{\mathbb K}}
\def\id               {{\rm id}}
\def\id                 {\mathrm{id}}
\def\Hom{\textnormal{Hom}}
\def\End            {\text{End}}
\def\mod            {\text{-}\mathrm{mod}}
\def\Inn            {\mathrm{Inn}}
\def\As                 {A^{str}}
\def\iso     {\xrightarrow{\sim}}
\def\unit       {\mathbf{1}}
\def\varphis    {\varphi^{str}}
\newcommand\G    {\texorpdfstring{$G$}{G}}  
\begin{document}

\thispagestyle{empty}
\begin{flushright}
    {\sf ZMP-HH/11-14}\\
    {\sf Hamburger$\;$Beitr\"age$\;$zur$\;$Mathematik$\;$Nr.$\;$415}\\[2mm]
    September 2011
\end{flushright}
\vskip 2.0em
\begin{center}\Large
Strictification of weakly equivariant Hopf algebras
\end{center}\vskip 1.4em
\begin{center}
Jennifer Maier, Thomas Nikolaus and Christoph Schweigert
\end{center}

\vskip 3mm

\begin{center}\it
   Fachbereich Mathematik, \ Universit\"at Hamburg\\
   Bereich Algebra und Zahlentheorie\\
   Bundesstra\ss e 55, \ D\,--\,20\,146\, Hamburg  
\end{center}
\vskip 2.5em

\begin{abstract}
A weakly equivariant Hopf algebra is a Hopf algebra $A$ with an action of a 
finite group $G$ up to inner automorphisms of $A$. We show that each weakly equivariant Hopf algebra can be replaced by a Morita equivalent algebra $\As$ with a strict action of $G$ and with a coalgebra structure
that leads to a  tensor equivalent representation category. However, the coproduct of this strictification cannot, in general, 
be chosen to be unital, so that a strictification
of the $G$-action can only be found on
a \emph{weak} Hopf algebra $\As$.
\end{abstract}

\section{Introduction}

This paper is a supplement to our paper \cite{mns2011}. In that paper we constructed a 3-dimensional equivariant topological field theory which is a generalization of the well-known Dijkgraaf-Witten theory \cite{DW90, FQ93}. Our generalization is equivariant with respect to a finite group $G$ (which was called $J$ there). Our motivation comes from  orbifold constructions in conformal field theory. 

It is well known that one can extract a modular category $\calc$ from a 3-dimensional topological field theory, at least up to some technical subtleties \cite[Chapter 4 \& 5]{BKLec},
involving properties of the dualities. A modular category is, 
in particular, a tensor category.  If the initial topological field theory is 
moreover $G$-equivariant, the category $\calc$ carries additionally a $G$-grading and an action of $G$ that is compatible with the tensor product. Such a structure is called a $G$-equivariant tensor category  resp. $G$-modular category \cite{kirI17, turaev2010}.

In general the action of the group $G$ on a $G$-modular category $\calc$ is given by tensor functors $\phi_g: \calc \to \calc$ together with 
compositors $\phi_g \circ \phi_h \iso \phi_{gh}$, subject to coherence laws 
 for threefold products. 
It has been demonstrated by M\"uger 
\cite[Appendix 5]{turaev2010} that one can replace $\calc$ by an equivalent category $\calc^{str}$ with a {\em strict} action of $G$, i.e. there the compositors are given by the identity: $\phi_g \circ \phi_h = \phi_{gh}$. 

Now consider the $G$-modular category $\calc$ which belongs to our equivariant Dijkgraaf-Witten theory mentioned at the beginning.
Although the category $\calc$ can relatively easily be described abstractly, it is very hard to work with it explicitly when it comes to orbifolding and showing modularity. Therefore in \cite[section 4]{mns2011} we realized $\calc$ as the representation category of a certain algebra $A$, which we called the equivariant Drinfel'd double. The fact that $\calc$ is a tensor category is reflected by the fact that $A$ is a Hopf algebra. Furthermore  there is also an algebraic structure on $A$ belonging to the $G$-action on the representation category. This structure is not just a $G$-action on $A$, as one might naively expect, but a \emph{weak} $G$-action, which is an action by Hopf algebra automorphisms $\varphi_g: A \to A$ such that $\varphi_g \circ \varphi_h$ equals $\varphi_{gh}$ only up to an inner automorphism of $A$. This weakening of the $G$-action reflects the fact that the action on the category is only weak in the sense that we 
have isomorphisms $\phi_g \circ \phi_h \stackrel\sim\to \phi_{gh}$ 
of functors rather than equalities. In order to accommodate the example of the algebra $A$, we had to introduce the notion of Hopf algebra with weak $G$-action (\cite[definition 4.13]{mns2011}), generalizing the notion of Hopf algebra with strict $G$-action considered before \cite{turaev2010,Vir02}. \\

In the light of M\"uger's observation that one can replace a $G$-equivariant tensor category $\calc$ by an equivalent category $\calc^{str}$ with strict 
$G$-action it is a natural question to ask whether one can replace a Hopf algebra $A$ with weak $G$-action by a Hopf algebra $A^{str}$ with strict 
$G$-action such that the representation categories are equivalent as tensor categories. A first result of this paper asserts that this is not possible in 
general, see Theorem \ref{prop:nostrict}. The reason is that the Hopf algebra
axioms are too rigid:
the tensor product of the representation category
is, in the case of Hopf algebras, directly inherited from
the underlying tensor product of vector spaces. 
Weak Hopf algebras \cite{BNSz99,BSz00, NV02} have been introduced
to provide a more flexible notion for the tensor product. 
Note that the qualifier weak here refers to a weakening of the bialgebra axioms (i.e.\ a weakening of the unitality of the coproduct or, 
equivalently, of the counitality of the product) and should 
not be confused with `weak $G$-action'. We refer to the appendix for
a table summarizing the situation.

Thus, a refined version of the question posed above would be whether one can replace a Hopf algebra $A$ with weak $G$-action by a 
{\em weak} Hopf algebra $A^{str}$ with {\em strict} $G$-action such that the representation categories are equivalent. The second main result of the present paper is to show that this is indeed possible, see Theorem \ref{maintheorem}. The given concrete construction of $A^{str}$ is inspired 
by M\"uger's strictification procedure 
\cite[Appendix 5]{turaev2010}
on the level of categories. Nevertheless we present it in an independent 
and elementary manner which requires no knowledge about orbifold categories 
and other constructions that enter in the categorical strictification.

\paragraph{Acknowledgements.} We thank Alexander Barvels 
and J\"urgen Fuchs for helpful discussions. 
TN and CS are partially supported by the Collaborative Research Centre 676 
``Particles, Strings and the Early Universe - the Structure of Matter 
and Space-Time'' and the cluster of excellence ``Connecting particles with the cosmos''. JM and CS are partially supported by the Research priority program 
SPP 1388 ``Representation theory''.
We are grateful to the anonymous referee for many useful suggestions.

\section{Equivariant Hopf algebras and their representation categories}

In the following, let $G$ be a finite group.

\begin{Definition}
Let $A$ be an (associative, unital) algebra over a
field $\KK$. A \emph{weak $G$-action} on $A$ 
consists of (unital) algebra automorphisms $\varphi_g \in \Aut(A)$,
one  for every element $g\in G$, and invertible 
elements $c_{g,h} \in A^\times$, one for every pair of elements 
$g,h \in G$, such that for all $g,h,k \in G$
the following conditions are satisfied:
\begin{align}\label{compositors}
  \varphi_g \circ \varphi_h = \Inn_{c_{g,h}} \circ \varphi_{gh}\qquad
  \varphi_g(c_{h,k}) \cdot c_{g,hk}  = c_{g,h} \cdot 
  c_{gh,k} \quad \text{ and } \quad c_{1,1} = 1 \,\,\, . 
\end{align}
Here $\Inn_x$ with $x$ an invertible element of $A$ denotes 
the algebra automorphism $a \mapsto xax^{-1}$.
A weak action of a group $G$ is called
\emph{strict} if  $c_{g,h} = 1$ for all pairs $g,h\in G$. 
\end{Definition}

\begin{Remark}
Note that our notion of a weak action $(\varphi_g, c_{g,h})$ of a group $G$ on an algebra $A$ corresponds to a weak action in the sense of \cite{BCM86} together with the normal cocycle
\begin{align*}
\sigma: \KK[G] \times \KK[G]& \to A^{\times}\\
(g,h)&\mapsto c_{g,h}
\end{align*}
 that fulfills the cocycle and the twisted module condition of \cite{BCM86}.
\end{Remark}

We first demonstrate how a weak $G$-action on an algebra
$A$ induces a categorical action (see \cite{mns2011} for the definition)
on the representation category $A\mod$. Here by $A\mod$ we denote the category of right modules over $A$; using left modules would lead to slightly more 
complicated formulas in the rest of the paper. We define for each 
element $g\in G$ a functor $\phi_g$
on objects by
$$^g(M,\rho) := (M,\rho \circ (\id_M \otimes \varphi_{g^{-1}}))$$
and on morphisms by the identity, $^{\!g}f = f$, and take, for the functorial
isomorphisms,
$\alpha_{g,h}(M,\rho):= \rho(\id_M \otimes c_{h^{-1},g^{-1}})$. One can check, that the cocycle condition in (\ref{compositors}) implies the equality
\begin{equation*}
\alpha_{gh,k} \circ \alpha_{g,h} = \alpha_{g,hk} \circ  {}^g\alpha_{h,k}.
\end{equation*}
We summarize this in the following lemma.

\begin{Lemma}\label{action}
Given a weak action of $G$ on a $\KK$-algebra $A$,
the functors $\phi_g$ and the natural transformations
$\alpha_{g,h}$  define a categorical action on 
the abelian category $A\mod$ of right $A$-modules.
\end{Lemma}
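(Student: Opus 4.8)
The plan is to verify that the data $(\phi_g, \alpha_{g,h})$ satisfies the definition of a categorical action on $A\mod$. The author has already indicated that the cocycle condition yields the pentagon-like coherence $\alpha_{gh,k} \circ \alpha_{g,h} = \alpha_{g,hk} \circ {}^g\alpha_{h,k}$, so the proof amounts to checking that (i) each $\phi_g$ is a well-defined functor on $A\mod$, (ii) each $\alpha_{g,h}$ is a natural isomorphism $\phi_g \circ \phi_h \iso \phi_{gh}$, and (iii) the stated coherence relation together with a normalization condition holds. Let me sketch each piece.

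First I would check that $\phi_g$ is a functor. On an object $(M,\rho)$ we set ${}^g(M,\rho) = (M, \rho \circ (\id_M \otimes \varphi_{g^{-1}}))$; since $\varphi_{g^{-1}}$ is a \emph{unital algebra} automorphism of $A$, the twisted action $\rho \circ (\id_M \otimes \varphi_{g^{-1}})$ is again a right $A$-module structure on the same underlying vector space $M$. On morphisms $\phi_g$ acts as the identity, so functoriality (preservation of composition and identities) is immediate; the only content is that an $A$-linear $f\colon (M,\rho) \to (N,\sigma)$ remains $A$-linear for the twisted actions, which follows because twisting both source and target by the same $\varphi_{g^{-1}}$ preserves the intertwining condition.

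Next I would verify that $\alpha_{g,h}$ is a natural isomorphism. For a fixed module the component $\alpha_{g,h}(M,\rho) = \rho(\id_M \otimes c_{h^{-1},g^{-1}})$ is a $\KK$-linear endomorphism of $M$; it is invertible because $c_{h^{-1},g^{-1}}$ is invertible in $A$. The key point is that $\alpha_{g,h}(M,\rho)$ is a morphism from $\phi_g\phi_h(M,\rho)$ to $\phi_{gh}(M,\rho)$ in $A\mod$: one must compare the module structures $\rho \circ (\id_M \otimes \varphi_{h^{-1}}\varphi_{g^{-1}})$ and $\rho \circ (\id_M \otimes \varphi_{(gh)^{-1}})$, and the relation $\varphi_g \circ \varphi_h = \Inn_{c_{g,h}} \circ \varphi_{gh}$ from (\ref{compositors}) (applied with the appropriate inverse indices) shows that conjugation by $c_{h^{-1},g^{-1}}$ exactly intertwines them. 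Naturality in $(M,\rho)$ is automatic: since morphisms are $A$-linear and $\alpha_{g,h}$ is given by right multiplication by a fixed algebra element, the relevant square commutes.

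Finally I would record the coherence data. The hexagon/associativity constraint $\alpha_{gh,k} \circ \alpha_{g,h} = \alpha_{g,hk} \circ {}^g\alpha_{h,k}$ is precisely the translation of the cocycle identity $\varphi_g(c_{h,k}) \cdot c_{g,hk} = c_{g,h} \cdot c_{gh,k}$, as the author already noted, and the normalization $c_{1,1}=1$ gives the unit compatibility $\alpha_{1,1} = \id$. I expect the only genuinely delicate step to be step (ii): getting the inverse-index bookkeeping right so that the conjugation relation lines up the two twisted module structures in the correct order, and confirming the orientation of the isomorphism matches the stated coherence equation rather than its transpose. Once the indices are managed carefully, everything else is a routine check that the axioms of a categorical action are met.
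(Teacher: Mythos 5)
Your proposal is correct and follows the same route as the paper, which offers no written proof beyond the remark that the cocycle condition yields the coherence identity $\alpha_{gh,k}\circ\alpha_{g,h}=\alpha_{g,hk}\circ{}^g\alpha_{h,k}$; your fleshed-out verification of the functoriality of $\phi_g$, of the intertwining property of $\alpha_{g,h}$ via $\varphi_{h^{-1}}\circ\varphi_{g^{-1}}=\Inn_{c_{h^{-1},g^{-1}}}\circ\varphi_{h^{-1}g^{-1}}$, and of naturality is exactly the routine check the authors leave implicit. In particular your index bookkeeping is right: right multiplication by $c_{h^{-1},g^{-1}}$ does carry the $\phi_g\phi_h$-module structure to the $\phi_{gh}$-module structure.
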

 
In the following we will mostly be interested in Hopf algebras. We therefore adapt the definition of a $G$-action to Hopf algebras.

\begin{Definition}\label{def:Hopf-action}
A weak $G$-action on a Hopf algebra $A$ is a weak $G$-action $((\varphi_g)_{g\in G},(c_{g,h})_{g,h\in G})$ on the underlying algebra which in addition satisfies the following properties:
\begin{itemize}
\item
$G$ acts by automorphisms of Hopf algebras.
\item The elements $(c_{g,h})_{g,h\in G}$ are group-like, i.e $\Delta(c_{g,h}) = c_{g,h}\otimes c_{g,h}$.
\end{itemize}
\end{Definition}

\begin{Remark}\label{rem:weak-Hopf-action}
Analogously, one can give the definition of a weak $G$-action on a weak 
Hopf algebra.
In that case, we require the elements
$c_{g,h}$ to be right grouplike in the sense of \cite{Vecs}.
By \cite[Corollary 5.2]{Vecs}, this amounts to all
$c_{g,h}$ being invertible and obeying
$\Delta(c_{g,h})=(c_{g,h}\otimes c_{g,h})\Delta(1)$.
 \end{Remark}

\begin{Lemma}
Given a weak action of $G$ on a Hopf algebra $A$,
the induced action on the tensor category $A\mod$ 
of right $A$-modules is by strict tensor functors and tensor transformations.
\end{Lemma}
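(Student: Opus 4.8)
The plan is to unwind the standard tensor structure on $A\mod$ and to observe that the two extra hypotheses of Definition \ref{def:Hopf-action} are each tailored to make one of the two pieces of data monoidal with \emph{trivial} tensorators. Recall that for right $A$-modules the tensor product $(M,\rho)\otimes(N,\sigma)$ carries the action $(m\otimes n)\cdot a=\sum(m\cdot a_{(1)})\otimes(n\cdot a_{(2)})$ pulled back along the coproduct, and that the tensor unit is $\unit=\KK$ with action $k\cdot a=\epsilon(a)\,k$ given by the counit. I will write $\Delta(a)=\sum a_{(1)}\otimes a_{(2)}$ throughout. Since, by Lemma \ref{action}, the $\phi_g$ and $\alpha_{g,h}$ already form a categorical action, only the compatibility with this tensor structure remains to be checked.

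First I would show that each $\phi_g$ is a strict tensor functor. Because $\phi_g$ leaves the underlying vector space and every linear map untouched and only twists the action by $\varphi_{g^{-1}}$, it suffices to exhibit the tensorators as identities. On $\phi_g(M\otimes N)$ the element $a$ acts by $(m\otimes n)\cdot\varphi_{g^{-1}}(a)$; since $\varphi_{g^{-1}}$ is a Hopf algebra, hence coalgebra, automorphism, one has $\Delta(\varphi_{g^{-1}}(a))=\sum\varphi_{g^{-1}}(a_{(1)})\otimes\varphi_{g^{-1}}(a_{(2)})$, so this action is identical to the tensor-product action on $\phi_g(M)\otimes\phi_g(N)$. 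Thus $\phi_g(M\otimes N)=\phi_g(M)\otimes\phi_g(N)$ on the nose, and likewise $\epsilon\circ\varphi_{g^{-1}}=\epsilon$ gives $\phi_g(\unit)=\unit$. Coherence with the (nontrivial) associators and unit constraints inherited from $\KK$-vector spaces is then automatic, as $\phi_g$ acts as the identity on underlying linear maps and those constraints depend only on the underlying spaces.

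Next I would verify that each $\alpha_{g,h}\colon\phi_g\circ\phi_h\Rightarrow\phi_{gh}$ is a tensor transformation; recall $\alpha_{g,h}(M)$ is right multiplication by $c:=c_{h^{-1},g^{-1}}$. Evaluating on a tensor product and using that $c$ is grouplike, $\Delta(c)=c\otimes c$, gives $(m\otimes n)\cdot c=(m\cdot c)\otimes(n\cdot c)$, which is exactly $(\alpha_{g,h}(M)\otimes\alpha_{g,h}(N))(m\otimes n)$. Since the tensorators of $\phi_g\circ\phi_h$ and of $\phi_{gh}$ are identities by the previous step, this is precisely the multiplicativity condition making $\alpha_{g,h}$ a monoidal natural transformation. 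Compatibility with the unit follows from $\epsilon(c)=1$, which is forced by grouplikeness together with invertibility: applying $\epsilon\otimes\id$ to $\Delta(c)=c\otimes c$ yields $\epsilon(c)\,c=c$, hence $\epsilon(c)=1$.

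The computations are routine, so I expect no genuine obstacle beyond the bookkeeping of coproducts. The only point requiring conceptual care is to read ``strict tensor functor'' as referring to the triviality of the tensorators rather than to strictness of $A\mod$ itself, which retains the associators of $\KK$-vector spaces; once this is fixed, the argument amounts to matching comultiplicativity and counitality of the $\varphi_g$ to the functors and grouplikeness of the $c_{g,h}$ to the transformations.
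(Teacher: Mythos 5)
Your proof is correct; the paper in fact states this lemma without any proof, treating the verification as routine. Your argument --- matching the coalgebra-morphism property of the $\varphi_g$ to the strictness of the tensor functors $\phi_g$, and the grouplikeness (hence counitality) of the $c_{g,h}$ to the monoidality of the transformations $\alpha_{g,h}$ --- is exactly the intended check that the authors omit.
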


We next turn to an algebraic structure that yields tensor categories with 
$G$-action and compatible $G$-grading, called 
$G$-equivariant tensor categories \cite{kirI17}.

\begin{Definition}\label{GHopf}
A \emph{$G$-Hopf algebra} over $\KK$ is a Hopf algebra 
$A$ with a weak $G$-action $((\varphi_g)_{g\in G},(c_{g,h})_{g,h\in G})$ 
and a $G$-grading $A = \bigoplus_{g\in G} A_{g}$ such that:
\begin{itemize}
\item 
The algebra structure of $A$ restricts to
the structure of an associative algebra on
each homogeneous component so that $A$ is the direct 
sum of the components $A_g$ as an algebra.

\item The action of $G$ is compatible with the grading,
i.e.\ $\varphi_g(A_h) \subset A_{ghg^{-1}}$.

\item 
The coproduct $\Delta:\ A\to A\otimes A$ respects
the grading, i.e.\
$$\ \Delta (A_g) \subset \bigoplus_{p,q\in G,pq=g} A_p\otimes
A_q \,\, . $$

\end{itemize}
\end{Definition}

\begin{Remark}\label{normalized}
\begin{enumerate}
\item
For the counit $\epsilon$ and the antipode $S$ of
a $G$-Hopf algebra, the compatibility relations
with the grading $\epsilon(A_g) = 0$ for $g \ne 1$ and 
$S(A_g) \subset A_{g^{-1}}$ are immediate consequences of
the definitions.

\item The restrictions of the structure maps
endow the homogeneous component $A_1$ of $A$  
with the structure of a Hopf algebra with a weak
$G$-action.

\item $G$-Hopf algebras with \emph{strict} $G$-action 
have been considered under the name ``$G$-crossed Hopf coalgebra''
in \cite[Chapter VII.1.2]{turaev2010}.

\item Hopf algebras with weak $G$-action give a special case of $G$-Hopf 
algebra, where the grading is concentrated in degree $1$. Thus all results of this paper imply analogous results where the term $G$-Hopf algebra is 
replaced by Hopf algebra with weak $G$-action.
\end{enumerate}
\end{Remark}

The category $A\mod$ of finite-dimensional modules
over a $G$-Hopf algebra inherits a natural (left and right) duality
from the duality of the underlying category of 
$\KK$-vector spaces. The weak action described in
Lemma \ref{action} is even a monoidal action,
since $G$ is required to act by Hopf algebra morphisms.
A grading on $A\mod$ can be given by taking 
$(A\mod)_g = A_g\mod$ as the $g$-homogeneous component. 
{}From the properties of a $G$-Hopf algebra one can finally deduce 
that the tensor product, duality and grading are compatible with 
the $G$-action. We have thus arrived at the following statement:

\begin{Lemma}{\textnormal{\cite[Lemma 4.15]{mns2011}}}\label{G-equivariant}
The category of representations of a $G$-Hopf algebra inherits the natural structure of a $\KK$-linear, abelian 
$G$-equivariant tensor category with dualities.
\end{Lemma}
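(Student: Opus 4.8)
The plan is to verify the axioms of a $G$-equivariant tensor category with dualities one at a time, observing that most of the underlying structure has already been assembled and that the only genuinely new ingredient is the $G$-grading together with its compatibilities. The $\KK$-linear abelian structure on $A\mod$ is the standard one on a module category. The monoidal structure, with tensor product $M\otimes N$ carrying the $A$-action pulled back along the coproduct $\Delta$, unit object $\KK$ acted on through the counit $\epsilon$, associativity and unit constraints inherited from $\KK$-vector spaces, together with the left and right dualities induced by the antipode $S$, are precisely the monoidal-with-duality structures that exist for any Hopf algebra. The $G$-action by strict tensor functors $\phi_g$ and tensor transformations $\alpha_{g,h}$ is supplied by Lemma \ref{action} and the lemma following Definition \ref{def:Hopf-action}. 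So the work lies in introducing the grading and matching it to tensor product, action and duality, which is exactly where the three defining clauses of a $G$-Hopf algebra enter.

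First I would make the grading precise. Since $A=\bigoplus_g A_g$ as an algebra, every right $A$-module decomposes canonically into its pieces on the blocks $A_g$, so setting $(A\mod)_g:=A_g\mod$ — viewing an $A_g$-module as an $A$-module on which the other blocks act by zero — exhibits every object uniquely as a direct sum of homogeneous ones and yields the decomposition $A\mod=\bigoplus_g (A\mod)_g$ demanded of a graded category.

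Next I would check the three compatibilities, each reducing to one clause of Definition \ref{GHopf} or Remark \ref{normalized}. For the tensor product: if $M$ is concentrated in degree $p$ and $N$ in degree $q$, then for $a\in A_r$ the action on $M\otimes N$ is $\sum m.a_{(1)}\otimes n.a_{(2)}$, and because $\Delta(A_r)\subset\bigoplus_{st=r}A_s\otimes A_t$ only the summand with $a_{(1)}\in A_p$, $a_{(2)}\in A_q$ survives, forcing $r=pq$; hence $M\otimes N$ lives in degree $pq$. The relation $\epsilon(A_g)=0$ for $g\neq 1$ places the unit object in the neutral component. For the action: the twisted module $\phi_g(M)=(M,\rho\circ(\id_M\otimes\varphi_{g^{-1}}))$ lets $a$ act as $\varphi_{g^{-1}}(a)$, and $\varphi_{g^{-1}}(A_k)\subset A_{g^{-1}kg}$ shows that a degree-$h$ module becomes degree-$ghg^{-1}$, so $\phi_g((A\mod)_h)\subset (A\mod)_{ghg^{-1}}$ as the crossing axiom requires. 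For the duality: $M^*$ carries the action twisted by $S$, and $S(A_g)\subset A_{g^{-1}}$ sends degree $g$ to degree $g^{-1}$.

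Finally I would note that the coherence relating grading, action and tensor product — that each $\phi_g$ is a graded tensor functor respecting the crossing — follows by combining the tensor-functor property already recorded with the degree computations above, and is routine bookkeeping. The main obstacle is not any single hard step but the simultaneous verification that the grading is preserved by coproduct, action and antipode in a mutually consistent way; the most delicate point is the coproduct computation for the tensor product, where one must track exactly which homogeneous component of $\Delta(a)$ acts nontrivially on $M\otimes N$ and confirm that the clause $\Delta(A_g)\subset\bigoplus_{pq=g}A_p\otimes A_q$ is precisely what makes $\otimes$ additive on degrees.
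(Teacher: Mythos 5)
Your proposal is correct and follows essentially the same route as the paper, which only sketches this argument in the paragraph preceding the lemma (deferring details to \cite[Lemma 4.15]{mns2011}): duality from the underlying vector spaces via the antipode, the monoidal $G$-action from Lemma \ref{action} upgraded by the requirement that $G$ act by Hopf algebra morphisms, the grading $(A\mod)_g = A_g\mod$, and the compatibility checks driven by the three clauses of Definition \ref{GHopf}. Your elaboration of the degree computation for $\Delta$, $\varphi_g$ and $S$ fills in exactly the bookkeeping the paper leaves implicit.
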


A similar result holds for  $G$-weak Hopf algebras.

\section{Strictification of the group action}

The action of the group $G$ on a $G$-equivariant tensor category $\calc$ can always be strictified (see \cite[Appendix 5]{turaev2010}), i.e.\ there is an equivalent $G$-equivariant tensor category $\calc^{str}$ with strict $G$-action (all compositors are identities). 
If one starts  with the representation category of a $G$-Hopf algebra $A$, it 
is natural to ask whether this strictification leads to the representation 
category of another $G$-Hopf algebra with strict $G$-action. We will make 
this precise in the next definition. A $G$-equivariant functor between $G$-equivariant tensor categories is a tensor functor 
$F$ together with natural isomorphisms
\[\psi_g: F( {}^gM) ~\iso~ {}^gF(M)\]
such that for every pair $g,h \in G$ the obvious coherence diagrams of morphisms from $F(^{gh}M)$ to\; $^{gh}F(M)$ commute. See also \cite[Appendix 5, Def. 2.5]{turaev2010}.

\begin{Definition} \label{def:strictification}
\begin{enumerate}
\item
Let $A$ be a Hopf algebra with weak $G$-action. A \emph{strictification} of $A$ is a weak Hopf algebra $B$ with \emph{strict} $G$-action and 
an equivalence 
\[A\mod \iso B\mod\]
of tensor categories with $G$-action.
\item
Let $A$ be a $G$-Hopf algebra. A \emph{strictification} of $A$ is a $G$-weak Hopf 
algebra $B$ with \emph{strict} $G$-action and an equivalence 
\[A\mod \iso B\mod\]
of $G$-equivariant tensor categories.
\end{enumerate}

\end{Definition}

We will now show that it is in general not possible to find a strictification that is a Hopf algebra, rather than a weak Hopf algebra. This shows that we really have to allow for weak Hopf algebras as strictifications. In the next chapter we then show that a strictification as a weak Hopf algebra always exists. \\

Consider the weak action of $\ZZ/2\times\ZZ/2 = \{1,t_1,t_2,t_1t_2\}$ on the group algebra $ \mathbb{C}[\ZZ/2]$ of $\ZZ/2 = \{1,t\}$ given by

\begin{equation*}
\varphi_g =  \id \quad\quad \textnormal{for all } g \in \ZZ/2\times\ZZ/2
\end{equation*}
and non-trivial compositors given by the grouplike elements $c_{g,h}\in \mathbb{C}[\ZZ/2]$ as in the following table:

\begin{equation}\label{table}
\begin{tabular}{l|c|c|c|c}
$h \backslash g$ &1& $t_1$ & $t_2$ & $t_1t_2$\\
\hline
1&1&1&1&1
\\
\hline
$t_1$&1&$t$&$t$&1
\\
\hline
$t_2$&1&1&1&1
\\
\hline
$t_1t_2$&1&$t$&$t$&1
\end{tabular}
\end{equation}

In \cite[Section 3.1]{mns2011} we showed how weak actions correspond to extensions of groups together with the choice of a set theoretic section. In this case, the relevant extension is given
by the exact sequence of groups
\begin{equation*}
\ZZ/2 \to D_4 \to \ZZ/2\times \ZZ/2\, ,
\end{equation*}
where $D_4$ denotes the dihedral group of order 8. The inclusion of $\ZZ/2$ into $D_4$ is given by mapping the nontrivial $t$ element of $\ZZ/2$ to the rotation by $\pi$. The projection to $\ZZ/2\times \ZZ/2$ is given by mapping the rotation $a \in D_4$ by $\frac{\pi}{2}$ to the first generator $t_1$ and the reflection $b \in D_4$ to the second generator $t_2$. The set theoretic section is defined by $s:\ZZ/2\times \ZZ/2 \to D_4$ with $s(1) = 1, s(t_1) = a, s(t_2) = b, s(t_1t_2) = ab$.
\begin{Theorem}\label{prop:nostrict}
There is no strictification  as a Hopf algebra of $\mathbb{C}[\ZZ/2]$ with the weak $\ZZ/2\times\ZZ/2$-action with compositors as displayed in \eqref{table}.
\end{Theorem}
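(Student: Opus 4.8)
The plan is to recast the statement as a cohomological obstruction on the representation category and to show that the compositors of \eqref{table} realise a class that no genuine Hopf algebra can produce. First I would make the induced action of Lemma \ref{action} completely explicit. Writing $A=\mathbb{C}[\ZZ/2]$ and using $A\mod\simeq\mathrm{Rep}(\ZZ/2)$, the fact that $\varphi_g=\id$ for all $g$ means that every functor $\phi_g$ is literally the identity, while the compositor $\alpha_{g,h}$ is the tensor automorphism of $\id$ given by right multiplication with the grouplike element $c_{h^{-1},g^{-1}}\in\{1,t\}$. The monoidal automorphisms of the identity functor of $\mathrm{Rep}(\ZZ/2)$ form the group $\Aut^{\otimes}(\id)\cong\ZZ/2$ (the nontrivial element acting by $+1$ on the trivial and by $-1$ on the sign representation), and under this identification the family $(\alpha_{g,h})$ is exactly the $2$-cocycle read off from \eqref{table}. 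I would then record that its class in $H^{2}(\ZZ/2\times\ZZ/2,\ZZ/2)$ is nonzero: it is the class classifying the central extension $\ZZ/2\to D_4\to\ZZ/2\times\ZZ/2$ described after \eqref{table}, and a cohomologically trivial cocycle would force that extension to split and hence to be the abelian group $(\ZZ/2)^{3}$, contradicting nonabelianness of $D_4$.

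Next I would assume, for contradiction, that a strictification as a Hopf algebra exists: a Hopf algebra $B$ with \emph{strict} $G$-action together with a $G$-equivariant tensor equivalence $(F,(\psi_g))\colon A\mod\iso B\mod$. The decisive input is that $B$ is a \emph{genuine} Hopf algebra, so the forgetful functor $U\colon B\mod\to\mathrm{Vec}_{\KK}$ is \emph{strict} monoidal; moreover a strict $G$-action changes only the module structure and not the underlying vector space, so $U\circ\phi_g^{B}=U$ holds on the nose and the compositors of the $B$-action map to identities under $U$. Setting $\omega:=U\circ F$ I obtain a fibre functor on $A\mod$ whose group of tensor automorphisms is $\Aut^{\otimes}(\omega)\cong\ZZ/2$ (the grouplike elements of $\mathbb{C}[\ZZ/2]$), and the elements $\theta_g:=U(\psi_g)$ are honest tensor automorphisms of $\omega$. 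Pushing the coherence identity satisfied by $(F,(\psi_g))$ through the strict monoidal functor $U$, all $B$-side compositors drop out and one is left with the relation $\omega(\alpha_{g,h})=\theta_g\,\theta_h\,\theta_{gh}^{-1}$ in the abelian group $\Aut^{\otimes}(\omega)$; that is, the image of the compositor cocycle is a coboundary.

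To finish I would check that the natural map $\Aut^{\otimes}(\id)\to\Aut^{\otimes}(\omega)$ is the identity of $\ZZ/2$, so that triviality of $\omega(\alpha_{g,h})$ forces triviality of the original class in $H^{2}(\ZZ/2\times\ZZ/2,\ZZ/2)$, contradicting the first paragraph. I expect the main obstacle to be the second step rather than any explicit computation: one must argue carefully that the fibre functor is \emph{strictly} compatible with the group action and is strictly monoidal, since this is precisely the property that fails for weak Hopf algebras, where the tensor product of modules is only the subspace of the tensor product of underlying vector spaces cut out by $\Delta(\unit)$. This is the exact sense in which the Hopf algebra axioms are ``too rigid'': the strict monoidality of $U$ is what forces the coboundary relation and thereby forbids the nontrivial compositor class, while passing to a weak Hopf algebra destroys that strict monoidality and removes the obstruction, which is what opens the door to the strictification of Theorem \ref{maintheorem}.
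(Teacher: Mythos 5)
Your proposal is correct in substance and ultimately rests on the same obstruction as the paper---the compositor cocycle would have to become a coboundary, and it cannot---but it reaches that point by a genuinely different route. The paper goes through its Lemma \ref{hilfslemma}: it identifies the underlying algebra of a putative strictification with $\mathbb{C}[\ZZ/2]$ (part 1), shows the transported strict action must be trivial (part 3), identifies natural automorphisms of the identity functor with units of $\mathbb{C}[\ZZ/2]$ (part 2), and then kills the coboundary equation $a_{gh}c_{g,h}=a_g a_h$ by a short explicit computation valid for \emph{arbitrary} units $a_g\in\mathbb{C}[\ZZ/2]^{\times}$. You instead exploit the strict monoidality of the forgetful functor $U$ of a genuine Hopf algebra together with the fact that a strict $G$-action does not change underlying vector spaces, so that $U\circ\phi_g^{B}=U$ on the nose; this bypasses parts (1) and (3) of the lemma entirely, and you replace the explicit computation by the identification of $[c]$ with the class of the non-split central extension $\ZZ/2\to D_4\to\ZZ/2\times\ZZ/2$. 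This is more conceptual and makes visible why weak Hopf algebras escape the obstruction; the paper's version is more elementary and, as explained next, slightly more robust.

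The one point that needs care is your reliance on $\theta_g=U(\psi_g)$ lying in $\Aut^{\otimes}(\omega)\cong\ZZ/2$, which presupposes that the $\psi_g$ are \emph{monoidal} natural transformations. The paper's definition of a $G$-equivariant functor only requires the $\psi_g$ to be natural isomorphisms satisfying the coherence for pairs, and its computation accordingly allows arbitrary units $a_g$. If the $\theta_g$ are only known to lie in $\Aut(\omega)\cong\mathbb{C}[\ZZ/2]^{\times}\cong\mathbb{C}^{\times}\times\mathbb{C}^{\times}$, then non-splitness of the extension, i.e.\ nontriviality in $H^{2}\bigl((\ZZ/2)^{2},\ZZ/2\bigr)$, is \emph{not} sufficient: the map $H^{2}\bigl((\ZZ/2)^{2},\ZZ/2\bigr)\to H^{2}\bigl((\ZZ/2)^{2},\mathbb{C}^{\times}\bigr)$ has a large kernel (the class of $\ZZ/4\times\ZZ/2$ dies because square roots exist in $\mathbb{C}^{\times}$). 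What saves your argument is the sharper invariant you actually invoke, namely non-\emph{abelianness} of $D_4$: the alternating form $c_{g,h}c_{h,g}^{-1}$ (here $c_{t_1,t_2}c_{t_2,t_1}^{-1}=t$) is nontrivial, and this form is precisely what is detected by $H^{2}\bigl((\ZZ/2)^{2},\mathbb{C}^{\times}\bigr)\cong\ZZ/2$. Making that one step explicit closes the gap and puts your proof on the same footing of generality as the paper's.
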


\begin{Remark}
Note that the algebra $\mathbb{C}[\ZZ/2]$ is not a priori endowed with a grading by $\ZZ/2$. We can consider it as being trivially graded.  
\end{Remark}

For the proof of proposition \ref{prop:nostrict} we need 
the following elementary facts:

\begin{Lemma}\label{hilfslemma}
Let $A = \mathbb{C}[G]$ be the complex group algebra of a finite abelian group $G$. 
\begin{enumerate}
\item \label{punkteins}
Let $A'$ be an arbitrary Hopf algebra. If $A\mod \cong A'\mod$ as tensor categories, then $A \cong A'$ as algebras (not necessarily as Hopf algebras).
\item\label{punktzwei}
The natural endomorphisms of the identity functor $\textnormal{Id}: A\mod \to A\mod$ are given by the action of elements in $A$. More precisely there is an 
isomorphism of algebras
\begin{equation*}
A \iso \textnormal{End}(\textnormal{Id})_{A\mod} \,\,\, .
\end{equation*}
\item\label{punktdrei}
Let $\varphi: A \to A$ be an algebra automorphism such that the restriction functor $res_\varphi: A\mod \to A\mod$ is naturally isomorphic to the identity functor. Then $\varphi = \id$.
\end{enumerate}
\end{Lemma}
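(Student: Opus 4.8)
The plan is to prove the three statements separately, throughout exploiting that $A=\mathbb{C}[G]$ is commutative (since $G$ is abelian) and semisimple, so that $A\cong\mathbb{C}^{|G|}$ as an algebra and $A\mod$ is a semisimple category. For part \ref{punktzwei} I would evaluate natural endomorphisms of $\Id$ on the free module $A_A$. A natural transformation $\eta$ restricts to a right-module endomorphism $\eta_A\colon A_A\to A_A$, which is necessarily left multiplication by $c:=\eta_A(1)$. For any module $M$ and any $m\in M$ the map $f_m\colon A\to M$, $x\mapsto m\cdot x$, is a morphism in $A\mod$, so naturality of $\eta$ along $f_m$ forces $\eta_M(m)=m\cdot c$ for every $M$. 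Comparing the two resulting descriptions of $\eta_A$ shows that $c$ must be central; conversely every central $c$ yields such an $\eta$, and composition of natural transformations matches multiplication in $A$. Hence $\End(\Id)_{A\mod}\cong Z(A)$, and commutativity of $A$ gives $Z(A)=A$, which is the claimed algebra isomorphism.

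For part \ref{punktdrei} I would run the same free-module computation for a natural isomorphism $\eta\colon res_\varphi\iso\Id$. Setting $u:=\eta_A(1)$ and using that $\eta_A\colon res_\varphi(A)\to A$ is a module map together with naturality along the $f_m$, one finds that $\eta_M$ is right multiplication by $u$ on every $M$, with $u$ invertible. Imposing that $\eta_M$ intertwines the $\varphi$-twisted action on the source with the genuine action on the target then yields $\varphi(a)\,u=u\,a$ for all $a\in A$, i.e.\ $\varphi=\Inn_u$. Since $A$ is commutative, $\Inn_u=\id$, and therefore $\varphi=\id$.

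Part \ref{punkteins} carries the real content. An equivalence $A\mod\iso A'\mod$ of tensor categories is in particular an equivalence of abelian categories, so $A'\mod$ is semisimple with exactly $|G|$ isomorphism classes of simple objects, matching the $|G|$ one-dimensional characters that are the simples of $A\mod$. For $G$ abelian these characters are all $\otimes$-invertible (they multiply according to the group $\widehat{G}$ under the tensor product coming from the Hopf structure of $\mathbb{C}[G]$). A tensor equivalence preserves the unit object and carries invertible objects to invertible objects, and it maps simples to simples bijectively; hence every simple $A'$-module is invertible. The decisive step is then that in the representation category of a Hopf algebra the vector-space dimension is multiplicative under $\otimes$ while the unit is one-dimensional, so an invertible module $M$ satisfies $\dim M\cdot\dim M^{-1}=1$ and must itself be one-dimensional. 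Thus $A'$ is a finite-dimensional semisimple algebra whose $|G|$ simple modules are all one-dimensional, and Artin--Wedderburn gives $A'\cong\mathbb{C}^{|G|}\cong A$ as algebras. I expect this transfer to be the main obstacle: a bare categorical equivalence controls only the \emph{number} of simple modules, and it is precisely the monoidal structure---via invertibility together with multiplicativity of dimension---that pins down their dimensions and hence the algebra $A'$.
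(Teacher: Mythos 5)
Your proof is correct, and each part takes a somewhat different route from the paper's. For part \ref{punktzwei} the paper simply invokes the standard fact that $\End(\Id)$ of a module category is the centre of the algebra; your free-module/naturality computation is precisely a proof of that fact, so there is no real difference in content. For part \ref{punktdrei} the paper argues via characters: $res_\varphi$ permutes the simple characters, triviality of this permutation gives $\chi\circ\varphi=\chi$ for all $\chi$, and since the characters of a finite abelian group span $A^*$ this forces $\varphi=\id$. You instead evaluate the natural isomorphism on the regular module to get $\varphi(a)u=ua$, i.e.\ $\varphi=\Inn_u$, and then use commutativity; this is more elementary and actually proves the stronger general statement that $res_\varphi\cong\Id$ forces $\varphi$ to be inner for \emph{any} algebra. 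For part \ref{punkteins} the key observation is the same in both proofs --- the simples of $A\mod$ are invertible (the paper uses $V_i^{\otimes n}\cong\unit$) and multiplicativity of dimension under $\otimes$ then forces the corresponding simple $A'$-modules to be one-dimensional --- but the packaging differs: the paper concludes that the two fibre functors on $A\mod$ are isomorphic as plain functors and applies Tannakian reconstruction $A'\cong\End(G)$, whereas you go directly to Artin--Wedderburn from the list of simples. Your route avoids reconstruction entirely, at the (shared, and in context harmless) cost of implicitly taking $A'$ to be finite-dimensional so that semisimplicity of $A'\mod$ yields semisimplicity of $A'$; the paper's reconstruction step relies on the same implicit finiteness.
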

\begin{proof}
1.) By the reconstruction theorem we know that we can recover the Hopf algebra $A$ as endomorphisms of the fibre functor $F: A\mod \to \mathbb{C}\mod$ and $A'$ as endomorphisms of the fibre functor $G: A\mod \iso A'\mod \to \mathbb{C}\mod$. Now we claim that the underlying functors of $F$ and $G$ are naturally isomorphic. To this end note that for each simple representation $V_i$ of $A$ we have $V_i^n \cong 1$ where $n$ is the order of the group. Thus we have $F(V_i) \cong \mathbb{C} \cong G(V_i)$ by the fact that $F$ and $G$ are tensor-functors. But it is easy to see that the $\mathbb{C}$-linearity and the fact that $A\mod$ is semisimple then already show that $F$ and $G$  are isomorphic as functors
between abelian categories. This implies that $A \cong \End(F) \cong \End(G) \cong A'$. Note that the functors $F$ and $G$ still might have different tensor functor structures, leading to different Hopf algebra structures on $A$ and $A'$. \\

\noindent 2.) This follows from the fact that $A$ is abelian and from the 
fact that the center of an algebra is isomorphic to the endomorphisms of the 
identity functor on its representation category. \\

\noindent 3) The functor $res_\varphi$ is an equivalence of categories. Hence it sends simple objects to simple objects. That means it acts on simple characters $\chi: G \to \mathbb{C}^*$. By the fact that this functor is naturally isomorphic to the identity this action has to be trivial. Hence we know $\chi \circ \varphi  = \chi$ for each character $\chi$.
Because $G$ is abelian, the characters form a basis of 
the dual space $A^*$. Thus $\varphi^* = \id$ which implies $\varphi = \id$.

\end{proof}

\begin{proof}[Proof of theorem \ref{prop:nostrict}]
Assume that there is a Hopf algebra $H$ with a strict action of $\ZZ/2$ by 
Hopf algebra automorphisms $\varphi_{g}$ together with an equivalence
of categories $A\mod\to H\mod$. 
By lemma \ref{hilfslemma}\eqref{punkteins} we know that the underlying algebra of $H$ is isomorphic to $\mathbb{C}[\ZZ/2]$. We choose an isomorphism and transport the action $\varphi_{g}$ on $H$ to an action $\varphi'_{g}$ on $\mathbb{C}[\ZZ/2]$ (which is now only an action by algebra automorphisms and not necessarily by 
Hopf algebra automorphisms). By assumption there are now natural isomorphisms $res_{\varphi'_g} \iso res_{\varphi_g} = \text{Id}$ hence by lemma \ref{hilfslemma}\eqref{punktdrei} we have $\varphi'_g = \id$. 

Now we have both times the trivial action on the Hopf algebra $\mathbb{C}[\ZZ/2]$, 
once with the nontrivial compositors $c_{g,h}$ as displayed in table \eqref{table} above and once with the trivial compositors. By Lemma \ref{hilfslemma}\eqref{punktzwei}, an isomorphism between the two induced actions on the representation categories is induced by invertible elements $(a_g \in \mathbb{C}[\ZZ/2])_{g \in \ZZ/2\times \ZZ/2}$ such that 
\begin{align}\label{elements}
a_{gh} \cdot c_{g,h} = a_g \cdot a_h & &\text{for all}\; g,h \in \ZZ/2\times \ZZ/2
\end{align} 
We show that such elements can not exist:
Assume, there are invertible elements $(a_g \in \mathbb{C}[\ZZ/2])_{g \in \ZZ/2\times \ZZ/2}$ that fulfill \eqref{elements}. In particular we have, by setting $g= h = 1$ in \eqref{elements},
$a_1^2 = a_1$,
and since the elements $a_g$ are invertible, it follows that $a_1 = 1$. One concludes similarly, by setting $g = h = t_2$ resp. $g= h= t_1t_2$, that $a_{t_2}^2 = 1$ and $a_{t_1t_2}^2 = 1$. Now if we set $g= t_1$ and $h = t_2$ in \eqref{elements} and take the square of the resulting equation, we get
$a_{t_1}^2 = 1$,
but clearly those elements don't fulfill the equation $a_{1}t = a_{t_1}^2 $, which is \eqref{elements} with $g=h=t_1$.
This contradicts the existence of the 
strictification $H$ of a Hopf algebra.
\end{proof}

\section{Existence of a strictification}

In this section, we will successively prove the following theorem which
holds for Hopf algebras over an arbitrary field $\KK$.
\begin{Theorem}\label{maintheorem}
\begin{enumerate}
\item
For any Hopf algebra with weak $G$-action there exists a strictification in the sense of definition \ref{def:strictification}(1).
\item
For any $G$-Hopf algebra there exists a strictification in the sense of definition \ref{def:strictification}(2).
\end{enumerate}
\end{Theorem}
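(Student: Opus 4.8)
The plan is to imitate M\"uger's categorical strictification directly on the algebra, producing $\As$ as a Morita-enlarged ``matrix'' version of $A$ indexed by $G$, and then to transport the Hopf-theoretic structure through the resulting equivalence. Concretely, I would first set, as a vector space,
\[
\As \;=\; \bigoplus_{g,h\in G} A\,,
\]
writing a general element as a $G\times G$ matrix $(a_{g,h})_{g,h\in G}$ with entries in $A$, and equip it with a twisted matrix multiplication in which the composition of the $(g,h)$- and $(h,k)$-slots is governed by the automorphisms $\varphi_g$ and the compositors $c_{g,h}$. The conceptual source of this product is the identification $\As\cong\End_A(P)$ for the progenerator $P=\bigoplus_{g\in G}{}^gA$, the sum of the twisted regular right modules obtained by applying the functors $\phi_g$ of Lemma \ref{action} to the free rank-one module $A$; since each $\varphi_g$ is an automorphism, each ${}^gA$ is again free of rank one, so $P\cong A^{|G|}$ is a progenerator and $\As$ is Morita equivalent to $A$. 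This immediately yields an equivalence of abelian categories $A\mod\iso\As\mod$, which I would check is exact and fully faithful by the standard Morita machinery.

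The next step is the coalgebra structure. Using the coproduct of $A$ together with the fact that the $c_{g,h}$ are grouplike, I would define a coproduct $\Delta$ on $\As$ acting entrywise by $\Delta_A$ while inserting the appropriate $c$'s to record the index shifts, and define counit and antipode from those of $A$ on the diagonal blocks. One then verifies that $(\As,\Delta,\epsilon)$ is coassociative (from coassociativity of $\Delta_A$ together with the cocycle identity in \eqref{compositors}) and that $\Delta$ is multiplicative. The crucial point, and the reason the output is only a \emph{weak} Hopf algebra, is that $\Delta$ cannot be unital: because $\unit_{\As}=\sum_{g}e_{g,g}\otimes 1$ is spread over the diagonal, $\Delta(\unit_{\As})$ is forced to be a nontrivial separability-type element rather than $\unit\otimes\unit$. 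I would check that $\Delta(\unit_{\As})$ nevertheless satisfies the weak-bialgebra axioms of \cite{BNSz99,NV02}, making $\As$ a weak Hopf algebra, and that the resulting tensor product on $\As\mod$ corresponds, under the Morita equivalence, to the tensor product on $A\mod$.

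It then remains to strictify the action and, for part (2), to keep track of the grading. I would define automorphisms $\Phi_g$ of $\As$ by relabelling the matrix indices $(p,q)\mapsto(gp,gq)$, applying $\varphi_g$ entrywise, and dressing the result by the elements $c$; the entire point of having spread $A$ out over the index set $G$ is that the weakness $\varphi_g\circ\varphi_h=\Inn_{c_{g,h}}\circ\varphi_{gh}$ gets absorbed into the index shift, so that the $\Phi_g$ compose \emph{strictly}, $\Phi_g\circ\Phi_h=\Phi_{gh}$, i.e.\ all compositors become the identity. Finally I would show that the equivalence $A\mod\iso\As\mod$ upgrades to an equivalence of tensor categories with $G$-action, intertwining the weak action $(\phi_g,\alpha_{g,h})$ with the strict action coming from the $\Phi_g$; this establishes part (1). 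For part (2) I would carry the $G$-grading of $A$ through the same construction, grading $\As$ so that both the block decomposition and the index shift respect it, and then invoke Lemma \ref{G-equivariant} to conclude that the equivalence is one of $G$-equivariant tensor categories.

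I expect the main obstacle to be the coherent bookkeeping of the second step: pinning down the precise twist by the $c_{g,h}$ so that $\Delta$ is simultaneously coassociative, multiplicative, and compatible with the tensor product on $A\mod$, while honestly verifying the weak (non-unital) bialgebra axioms for $\Delta(\unit_{\As})$. Checking strictness of the $\Phi_g$ in the third step is conceptually the cleanest part, but upgrading the equivalence from one of abelian categories to a genuinely $G$-equivariant tensor equivalence — rather than merely a Morita equivalence — is where the compositor data $\alpha_{g,h}$ must be reconciled with the identities $\Phi_g\circ\Phi_h=\Phi_{gh}$, and I would expect that reconciliation to rely on the cocycle condition in \eqref{compositors} once more.
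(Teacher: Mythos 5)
Your proposal follows essentially the same route as the paper: the paper's $\As=\KK(G)\otimes A\otimes\KK[G]$ is precisely the twisted $G\times G$-matrix algebra $\End_A\bigl(\bigoplus_{g}{}^gA\bigr)$ you describe, Morita equivalent to $A$ via $M\mapsto M\otimes\KK[G]$, with entrywise coproduct, a non-unital $\Delta(\unit)$ spread over the diagonal, and a strict $G$-action given by index translation. The only discrepancies are coordinate choices: in the paper's basis the coproduct carries no $c_{g,h}$'s at all (they appear instead in the antipode and in the module structure maps), and the strict action is pure translation in the $\KK(G)$-factor with no entrywise $\varphi_g$ or $c$-dressing.
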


Note that the first part of Theorem \ref{maintheorem} follows from the second part if we consider a Hopf algebra with weak $G$-action as a $G$-Hopf algebra with grading concentrated in degree $1$, see also Remark 
\ref{normalized}(4). Therefore we will only prove the 
second part.

In the following let $A$ be a $G$-Hopf algebra with unit $1_A$, counit $\epsilon_A$, coproduct $\Delta_A$ and a weak $G$-action $((\varphi_g)_{g\in G},(c_{g,h})_{g,h\in G})$. The plan of this section is to construct step by step a 
strictification $\As$. 

In section \ref{algebra} we construct $\As$ as an algebra, in section \ref{weakHopf} we endow it with a weak Hopf algebra structure and finally in section \ref{equivariance} we turn it into a $G$-weak Hopf algebra with strict $G$-action. Along the way, we also provide the necessary equivalences of the representation categories 
\begin{equation*}
F:A\mod \overset{\sim}{\rightarrow}   \As\mod
\end{equation*}
and show that they preserve all the structure involved. This implies that 
$A^{str}$ is a strictification, which proves Theorem \ref{maintheorem}.

\subsection{The algebra}\label{algebra}

In the following we use the notation $\KK(G)$ for the $\KK$-vector space of functions on the finite group $G$, with distinguished basis $(\delta_{g})_{g\in G}$. By $\KK[G]$ we denote the $\KK$-vector space underlying the group algebra with basis $(g)_{g\in G}$.
\begin{Definition}
Set $\As= \KK(G)\otimes_\KK A \otimes_\KK \KK[G]$ as a vector space and define a multiplication on the generators of $\As$ by
\begin{align}\label{str_product}
(\delta_g \otimes a \otimes h)(\delta_{g'} \otimes a' \otimes h') = \delta(gh',g')(\delta_{g'} \otimes a \varphi_{h}(a')c_{h,h'} \otimes hh')
\end{align}
where $\delta(gh',g')$ is the Kronecker delta, i.e. $\delta(gh',g')=1$ if $gh'=g'$ and $\delta(gh',g')=0$ otherwise. This multiplication has the unit
\begin{align}\label{str_unit}
1 = \sum_{g\in G} \delta_g \otimes 1_A \otimes 1.
\end{align}
\end{Definition}
It can easily be checked that the product and the unit defined in (\ref{str_product}) and (\ref{str_unit}) endow $\As$ with the structure of an associative unital algebra.\\

We next define a functor $F:A\mod\to\As\mod$:
Let $M$ be an object in $A\mod$. Define an object in $\As\mod$ which is $M\otimes_\KK \KK[G]$ as a vector space and has the following right action of the algebra $\As$:
On an element of the form $(m\otimes k)$ with $m\in M, k \in G$, the action of $(\delta_g\otimes a \otimes h)$ reads:
\begin{align}\label{action_of_Astr}
(m\otimes k).(\delta_g\otimes a \otimes h) := \delta(kh,g) (m.\varphi_k(a)c_{k,h}\otimes g)
\end{align}
One checks that this really defines a right action of $\As$. For a morphism $f \in \Hom_{A}(M,N)$ we consider the morphism  $f \otimes \id_{\KK[G]}\in \Hom_{A^{str}}(M\otimes \KK[G] ,N\otimes \KK[G])$. Together this defines a functor:
\begin{align}\label{equivalence}
F:A\mod &\to \As\mod 
\end{align}

\begin{Proposition}\label{prop:equivalence}
The functor $F$ is an equivalence of abelian categories.
\end{Proposition}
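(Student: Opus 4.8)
The plan is to show that $F$ is fully faithful and essentially surjective, which are the two conditions for an equivalence of categories. Since $\As$ is an associative unital algebra and $A\mod$, $\As\mod$ are the categories of right modules, both being abelian, it suffices to verify these two properties directly from the explicit formulas (\ref{action_of_Astr}) defining $F$. I would not try to construct an explicit quasi-inverse by hand, since the decorated structure of $\As$ makes that cumbersome; instead I would establish full faithfulness and essential surjectivity separately.

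\textbf{Full faithfulness.} First I would check that $F$ is faithful. On morphisms $F$ sends $f$ to $f\otimes\id_{\KK[G]}$, and since $\KK[G]\neq 0$, the map $f\mapsto f\otimes\id_{\KK[G]}$ is injective, so faithfulness is immediate. The real content is fullness: given an $\As$-linear map $\tilde f: M\otimes\KK[G]\to N\otimes\KK[G]$, I must produce an $A$-linear $f:M\to N$ with $\tilde f = f\otimes\id_{\KK[G]}$. The key tool here is the family of idempotents $e_k := \sum_{g}\delta_g\otimes 1_A\otimes 1$ restricted appropriately — more precisely, acting on $(m\otimes k)$ by a suitable element of $\As$ one can read off the grading component $M\otimes k$ as the image of a projection. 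Indeed, from (\ref{action_of_Astr}) the element $\delta_k\otimes 1_A\otimes 1$ acts on $m\otimes k'$ by $\delta(k',k)(m\otimes k)$, i.e.\ it projects onto the $k$-th summand. Since $\tilde f$ commutes with all $\As$-actions, it commutes with these projections and hence is block-diagonal, $\tilde f = \bigoplus_k \tilde f_k$ with $\tilde f_k: M\otimes k\to N\otimes k$. Then I would use the elements of the form $\delta_g\otimes a\otimes h$ that move between different grading components to force all the blocks $\tilde f_k$ to coincide with a single $A$-linear map and to be $A$-linear; this is the step requiring the most care, and is where the explicit multiplication and action formulas must be used to pin down the $\varphi_k$ and $c_{k,h}$ twists.

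\textbf{Essential surjectivity.} Here I would show every $\As$-module $N$ is isomorphic to one of the form $F(M)$. The natural candidate is to take $M := N.(\delta_1\otimes 1_A\otimes 1)$, the image of the projection onto the ``degree $1$'' component, equipped with an $A$-action induced by restricting the $\As$-action via the embedding $a\mapsto \delta_1\otimes a\otimes 1$ (one checks this is a unital algebra map into the corner $(\delta_1\otimes 1_A\otimes 1)\As(\delta_1\otimes 1_A\otimes 1)$). One then verifies that $N\cong M\otimes\KK[G]$ as $\As$-modules, using the grouplike elements $\delta_g\otimes 1_A\otimes g$ (or similar) to identify each graded piece of $N$ with a copy of $M$. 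The fact that the compositors $c_{g,h}$ are invertible is essential for these identifying maps to be isomorphisms.

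\textbf{The main obstacle} I expect is the fullness argument: reconstructing a single $A$-linear morphism from the block-diagonal pieces of $\tilde f$ and verifying its $A$-linearity, because this forces one to track exactly how the automorphisms $\varphi_k$ and the cocycle elements $c_{k,h}$ interact in (\ref{str_product}) and (\ref{action_of_Astr}). The bookkeeping with the Kronecker deltas and the twisted products is where an error would most naturally creep in, so I would carry out that computation most carefully, checking the cocycle conditions (\ref{compositors}) are precisely what is needed.
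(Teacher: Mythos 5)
Your proposal follows essentially the same route as the paper: fullness via the projections $\delta_k\otimes 1_A\otimes 1$ (which show $f=\sum_g f_g\otimes\id_{\KK g}$) followed by using the full $\As$-action to force $f_g=f_1$ with $f_1$ $A$-linear, and essential surjectivity via $N_1:=N.(\delta_1\otimes 1_A\otimes 1)$ with the identification $n\otimes g\mapsto n.(\delta_g\otimes 1_A\otimes g)$, whose inverse indeed uses the invertibility of the $c_{g,h}$ as you anticipate. The computations you defer do go through exactly as you describe, so the plan is sound and matches the paper's proof.
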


\begin{proof}
We show that $F$ is essentially surjective and fully faithful.

For the essential surjectivity, note that an object $N$ in $\As\mod$ has a $G$-grading $N = \bigoplus_{g\in G}N_g$ with $N_g := N.(\delta_g \otimes 1_A \otimes 1)$. Endow the subspace $N_1 := N.(\delta_1\otimes 1_A \otimes 1) \subset N$ with the structure of an $A$-module by setting $n.a := n.(\delta_1\otimes a \otimes 1)$ for an element $n \in N_1$ and $a \in A$.

Define the $\KK$-linear map
\[\Theta: F(N_1) = N_1\otimes \KK[G] \to N\]
by
\[(n\otimes g) \mapsto n.(\delta_g \otimes 1_A \otimes g).\]
It is easy to see that $\Theta$ is an isomorphism with inverse $n \mapsto \sum_{g\in G} n.(\delta_1 \otimes (c_{g^{-1},g})^{-1}\otimes g^{-1}) \otimes g$.
To see that $\Theta$ is a morphism in $\As\mod$, note that the action of $\As$ on $F(N_1)$ is given by
\[(n\otimes k).(\delta_g \otimes a \otimes h) = \delta(kh,g) n.(\delta_1 \otimes \varphi_k(a)c_{k,h}\otimes 1)\otimes g\]
Hence we have
\begin{align*}
\Theta((n\otimes k).(\delta_g \otimes a \otimes h)) &= \delta(kh,g)\Theta(n.(\delta_1 \otimes \varphi_k(a)c_{k,h}\otimes 1)\otimes g)\\
&= \delta(kh,g)n.(\delta_1 \otimes \varphi_k(a)c_{k,h}\otimes 1)(\delta_g \otimes 1_A \otimes g)\\
&= \delta(kh,g)n.(\delta_g \otimes \varphi_k(a)c_{k,h}\otimes g)\\
\end{align*}
and
\begin{align*}
\Theta(n\otimes k).(\delta_g \otimes a \otimes h)
&= n.(\delta_k \otimes 1_A \otimes k)(\delta_g\otimes a\otimes h)\\
&= \delta(kh,g)n.(\delta_g\otimes\varphi_k(a)c_{k,h}\otimes g).
\end{align*}
This shows that $F(N_1) \cong N$ as $A^{str}$-modules and thus essential surjectivity.

It is clear that $F$ is faithful. In order to see that $F$ is also full, consider for two $A$-modules $M,N$ a morphism $f \in \Hom_{A^{str}}(F(M),F(N))$. We have
\begin{align*}
f(m\otimes k).(\delta_g \otimes 1_A \otimes 1) = f((m\otimes k).(\delta_g \otimes 1_A \otimes 1)) = \delta(k,g) f(m\otimes k),
\end{align*}
so $f(m\otimes k)\in N \otimes \KK k$
and we have $f = \sum_{g\in G} f_{g}\otimes \id_{\KK g}$ for some $f_g \in \Hom_{\KK}(M,N)$. From the definition \eqref{action_of_Astr} of the action of $\As$, it is clear that $f_1 \in \Hom_{A}(M,N)$. Now, since $f$ commutes with the action of $\As$, we have
\begin{align*}
\delta(kh,g)f_{g}(m.\varphi_k(a)c_{k,h})\otimes g & = f((m\otimes k).(\delta_g\otimes a \otimes h))\\
& = f(m\otimes k).(\delta_g \otimes a \otimes h)\\
& = \delta(kh,g)f_k(m).\varphi_k(a)c_{k,h}\otimes g
\end{align*}
and (by setting $a = 1_A$, $k=1$ and $h=g$) we get $f_g = f_1$ for all $g\in G$ and therefore $f$ is of the form $f =  f_1 \otimes \id_{\KK[G]}$.
\end{proof}

\subsection{The weak Hopf algebra structure}\label{weakHopf}

We need the strictification algebra $\As$ to have more structure in order for its representation category to be a tensor category. In fact, we want it to be a weak bialgebra. For the definition and properties of weak bialgebras, see e.g. \cite{nikshych2003invariants}. 
\begin{Proposition}
The linear maps $\Delta:\As \to \As \otimes \As$ and $\epsilon:\As \to \KK$ 
defined on the generators of $\As$ by
\begin{align*} 
\Delta(\delta_g \otimes a \otimes h) &= \sum_{(a)} (\delta_g \otimes a_{(1)} \otimes h)\otimes(\delta_g \otimes a_{(2)} \otimes h)\,\,\, , \\
\epsilon(\delta_g \otimes a \otimes h) &= \epsilon_A(a)
\end{align*}
endow $\As$ with the structure of a weak bialgebra.
Furthermore, the linear map $S:\As\to \As$ given by

\begin{align*}
S(\delta_g \otimes a \otimes h) = (\delta_{gh^{-1}} \otimes c_{h^{-1},h}^{-1}\cdot \varphi_{h^{-1}}\big(S_A(a)\big)\otimes h^{-1})
\end{align*}
is an antipode for $\As$, where $S_A$ is the antipode of A. 
\end{Proposition}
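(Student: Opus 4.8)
The plan is to verify the three defining conditions for a weak bialgebra with antipode in turn: first that $\Delta$ is coassociative and $\epsilon$ is a counit (so that $(\As,\Delta,\epsilon)$ is a coalgebra), then that these satisfy the weakened multiplicativity axioms of a weak bialgebra, and finally that $S$ satisfies the antipode axioms. Throughout I would work on the generators $\delta_g \otimes a \otimes h$ and extend by linearity, using Sweedler notation for the coproduct $\Delta_A(a) = \sum_{(a)} a_{(1)}\otimes a_{(2)}$ of the underlying Hopf algebra $A$.

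\textbf{The coalgebra structure.}
First I would check coassociativity of $\Delta$. Since $\Delta$ leaves the $\delta_g$ and $h$ factors untouched and only applies $\Delta_A$ to the middle factor, coassociativity of $\Delta$ reduces immediately to coassociativity of $\Delta_A$, and the counit axiom for $\epsilon$ reduces to the counit axiom $\sum_{(a)}\epsilon_A(a_{(1)})a_{(2)} = a = \sum_{(a)} a_{(1)}\epsilon_A(a_{(2)})$ in $A$. This step is routine. The first genuinely weak phenomenon appears when one computes $\Delta(1)$: from \eqref{str_unit} one gets $\Delta(1) = \sum_{g} (\delta_g\otimes 1_A\otimes 1)\otimes(\delta_g\otimes 1_A\otimes 1)$, which is \emph{not} equal to $1\otimes 1$ because of the diagonal constraint on the $\delta_g$ labels. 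This is precisely the failure of unitality of the coproduct that forces $\As$ to be a genuinely weak, rather than ordinary, bialgebra, and it is what one must track carefully in the remaining axioms.

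\textbf{Multiplicativity and the weak bialgebra axioms.}
The core computation is that $\Delta$ is an algebra homomorphism, $\Delta\big((\delta_g\otimes a\otimes h)(\delta_{g'}\otimes a'\otimes h')\big) = \Delta(\delta_g\otimes a\otimes h)\,\Delta(\delta_{g'}\otimes a'\otimes h')$. Here I would substitute the product formula \eqref{str_product} on both sides; the left side produces a single Kronecker delta $\delta(gh',g')$ together with $\Delta_A\big(a\varphi_h(a')c_{h,h'}\big)$, while the right side produces $\Delta_A(a)\,\Delta_A(\varphi_h(a'))\,\Delta_A(c_{h,h'})$ distributed across the two tensor legs. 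The three inputs needed to match them are: $\Delta_A$ is an algebra map on $A$; $\varphi_h$ commutes with $\Delta_A$ because $G$ acts by Hopf algebra automorphisms (Definition \ref{def:Hopf-action}); and $c_{h,h'}$ is grouplike, $\Delta_A(c_{h,h'}) = c_{h,h'}\otimes c_{h,h'}$, so it contributes the same factor to both legs. The one point demanding care is the bookkeeping of the Kronecker deltas and $\delta_g$-labels across the two tensor factors: on the right one multiplies $(\delta_g\otimes a_{(i)}\otimes h)(\delta_{g'}\otimes a'_{(i)}\otimes h')$ in each leg $i\in\{1,2\}$, each producing the \emph{same} delta $\delta(gh',g')$, so the two factors are consistently supported and collapse to the single delta on the left. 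Next I would verify the two weak-comultiplicativity-of-the-unit identities and the weakened counit identities that replace $\epsilon(xy)=\epsilon(x)\epsilon(y)$; these are the axioms $(\Delta\otimes\id)\Delta(1) = (\Delta(1)\otimes 1)(1\otimes\Delta(1)) = (1\otimes\Delta(1))(\Delta(1)\otimes 1)$ and $\epsilon(xyz) = \sum \epsilon(xy_{(1)})\epsilon(y_{(2)}z) = \sum \epsilon(xy_{(2)})\epsilon(y_{(1)}z)$, which I would check by direct substitution, again using grouplikeness of the $c_{g,h}$ and the fact that $\epsilon_A$ is multiplicative and $\varphi_g$-invariant (since $\varphi_g$ is a Hopf algebra map, $\epsilon_A\circ\varphi_g = \epsilon_A$). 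This multiplicativity computation, together with the unit-coproduct bookkeeping, is where I expect the main obstacle to lie, not because any single identity is deep but because the interaction of the three simultaneous factors ($\KK(G)$, $A$, $\KK[G]$) makes the index tracking delicate.

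\textbf{The antipode.}
Finally I would verify that $S$ is an antipode for the weak bialgebra, i.e.\ that it satisfies the three weak-antipode axioms. Writing $m$ for multiplication and using the convolution $S * \id$ and $\id * S$, the required identities in the weak setting are $\sum_{(x)} x_{(1)} S(x_{(2)}) = \epsilon_t(x)$, $\sum_{(x)} S(x_{(1)}) x_{(2)} = \epsilon_s(x)$, and $\sum_{(x)} S(x_{(1)}) x_{(2)} S(x_{(3)}) = S(x)$, where $\epsilon_s,\epsilon_t$ are the source and target counital maps landing in the canonical subalgebras. For a generator $x = \delta_g\otimes a\otimes h$, I would compute $\sum_{(a)} (\delta_g\otimes a_{(1)}\otimes h)\,S(\delta_g\otimes a_{(2)}\otimes h)$ directly. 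Substituting the formula for $S$, the second factor becomes $\delta_{gh^{-1}}\otimes c_{h^{-1},h}^{-1}\varphi_{h^{-1}}(S_A(a_{(2)}))\otimes h^{-1}$, and multiplying via \eqref{str_product} the $\KK[G]$-labels combine as $h\cdot h^{-1} = 1$ while the delta constraint fixes the grading label; the $A$-factor then reduces, after applying $\varphi_h$ to the inner $c_{h^{-1},h}^{-1}$ term and using the cocycle relation \eqref{compositors} to simplify $\varphi_h(c_{h^{-1},h}^{-1})c_{h,h^{-1}}$, to $\sum_{(a)} a_{(1)} S_A(a_{(2)}) = \epsilon_A(a)1_A$ by the antipode property of $S_A$. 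The outcome is an element supported on the diagonal $\delta_g$-subspace with trivial $\KK[G]$-label and scalar $\epsilon_A(a)$, which one recognizes as $\epsilon_t(\delta_g\otimes a\otimes h)$; the companion identity for $\id * S$ is symmetric. The appearance of $\epsilon_t(x)$ rather than $\epsilon(x)1$ is exactly the signature of the weak (non-unital coproduct) setting, and confirming that the convolution inverse produces these counital projections rather than the honest unit is the conceptual heart of this last step. The cocycle identity from \eqref{compositors} is the key algebraic input that makes the $c_{g,h}$ factors cancel correctly, so I would isolate and prove the needed consequence $\varphi_h(c_{h^{-1},h})\,c_{h,h^{-1}} = c_{h,1}\,c_{1,h^{-1}} = 1$ (after invoking normalization $c_{1,1}=1$ and the derived relations $c_{g,1}=c_{1,g}=1$) as a small preliminary computation before tackling the antipode axioms.
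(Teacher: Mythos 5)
Your proposal matches the paper's proof in both substance and approach: the paper likewise observes that $(\Delta,\epsilon)$ is simply the tensor-product coalgebra structure on $\KK(G)\otimes A\otimes\KK[G]$, writes out in detail only the multiplicativity of $\Delta$ (using exactly your three inputs: $\Delta_A$ is an algebra map, the $c_{g,h}$ are grouplike, and the $\varphi_h$ are coalgebra maps), and leaves the unit/counit compatibilities and the antipode axioms as direct checks, which you sketch correctly. The one slip is your final auxiliary identity: the cocycle relation specializes to $\varphi_h(c_{h^{-1},h})\,c_{h,1}=c_{h,h^{-1}}\,c_{1,h}$, i.e.\ $\varphi_h(c_{h^{-1},h})=c_{h,h^{-1}}$ (after deriving $c_{h,1}=c_{1,h}=1$), so the cancellation you actually need is $\varphi_h(c_{h^{-1},h}^{-1})\,c_{h,h^{-1}}=1$ \emph{together with} the conjugation coming from $\varphi_h\circ\varphi_{h^{-1}}=\Inn_{c_{h,h^{-1}}}$ applied to $S_A(a_{(2)})$, not the identity $\varphi_h(c_{h^{-1},h})\,c_{h,h^{-1}}=1$ as displayed.
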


\begin{proof}
The maps $\Delta$ and $\epsilon$ are a coassociative coproduct and a counit 
on $\As$, as they are just the structural maps of the tensor product 
coalgebra of $\KK(G)$, $A$  and $\KK[G|$ (where we consider the diagonal 
coproduct on both $\KK(G)$ and $\KK[G]$). We show that $\Delta$ is also a 
morphism of algebras, i.e.\ that
\begin{align}\label{alg.mor}
(m\otimes m)\circ(\id\otimes\tau\otimes\id)(\Delta\otimes \Delta) = \Delta \circ m. 
\end{align}
If we plug in two elements $(\delta_g \otimes a \otimes h), (\delta_{g'} \otimes a' \otimes h')$, we get for the left hand side of (\ref{alg.mor})
\begin{align*}
&\quad\sum_{(a)} (\delta_g \otimes a_{(1)} \otimes h)\cdot(\delta_{g'} \otimes a_{(1)}' \otimes h') \otimes (\delta_g \otimes a_{(2)} \otimes h)\cdot(\delta_{g'} \otimes a_{(2)}' \otimes h')\\
& = \sum_{(a)} \delta(gh',g')(\delta_{g'} \otimes a_{(1)} \varphi_h(a_{(1)}')c_{h,h'} \otimes hh')\otimes (\delta_{g'} \otimes a_{(2)} \varphi_h(a_{(2)}')c_{h,h'} \otimes hh') \,\,\, , 
\end{align*}
and for the right hand side
\begin{align*}
&\quad\delta(gh',g')\Delta(\delta_{g'} \otimes a \varphi_h(a')c_{h,h'} \otimes hh')\\
& = \delta(gh',g')\sum_{(a)}(\delta_{g'} \otimes (a \varphi_h(a')c_{h,h'})_{(1)} \otimes hh') \otimes (\delta_{g'} \otimes (a\varphi_h(a')c_{hh'})_{(2)} \otimes hh')\\
& = \sum_{(a)}\delta(gh',g')(\delta_{g'} \otimes a_{(1)} \varphi_h(a_{(1)}')c_{h,h'} \otimes hh')\otimes (\delta_{g'} \otimes a_{(2)} \varphi_h(a_{(2)}')c_{h,h'} \otimes hh') \,\,\, . 
\end{align*}
All equations follow just by definition, except for the last one, where we used that the coproduct in $A$ is a morphism of algebras, that the elements $c_{g,h}$ are group-like and that the action of $G$ on $A$ is a coalgebra-morphism.\\
Further equations concerning the compatibilities of the product with the 
counit, the coproduct with the unit and the antipode can be checked directly.
\end{proof}

In a weak Hopf algebra H the target and source counital maps are defined on an element $h \in H$ by
\begin{align*}
\epsilon_t(h) & := (\epsilon\otimes \id_H)(\Delta(1)(h  \otimes 1))\\
\epsilon_s(h) & := (\id_H \otimes\epsilon)((1  \otimes h)\Delta(1))
\end{align*}
The maps $\epsilon_t$ and $\epsilon_s$ are idempotents. The image of $H$ under them are called the target and source counital subalgebras 
\begin{align*}
H_t &:= \epsilon_t(H)\\ 
H_s &:= \epsilon_s(H).
\end{align*} 
The category of right modules over $H$ can be endowed with the structure of
a tensor category, where the tensor product of two modules $M,N$ is defined via the coproduct on the following vector space:
\[M\bar\otimes N := (M\otimes_{\KK} N) \Delta(1) \,\,\, . \]
The tensor unit is the source counital subalgebra $H_s$
with $H$-action given by $z.h := \epsilon_s(zh)$ for $h\in H, z \in H_s$.

\begin{Lemma}\label{counital}
For the algebra $\As$, the target and source counital maps are given by
\begin{align}
\epsilon_t(\delta_g \otimes a \otimes h) & = \epsilon_A(a)(\delta_{gh^{-1}}\otimes 1_A \otimes 1)\\
 \epsilon_s(\delta_g \otimes a \otimes h) & = \epsilon_A(a)(\delta_g\otimes 1_A \otimes 1)
\end{align}
and the target and source counital subalgebras are
\[A^{str}_t \cong A^{str}_s \cong \KK(G)\,\,\, . \]
\end{Lemma}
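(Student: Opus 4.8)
The plan is to evaluate $\epsilon_t$ and $\epsilon_s$ directly from their definitions, so the first step is to compute $\Delta(1)$. Since $\Delta$ acts diagonally on each tensor factor and $\Delta_A(1_A) = 1_A \otimes 1_A$, applying $\Delta$ to the unit $1 = \sum_{g} \delta_g \otimes 1_A \otimes 1$ gives $\Delta(1) = \sum_{g\in G} (\delta_g \otimes 1_A \otimes 1) \otimes (\delta_g \otimes 1_A \otimes 1)$.

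Before multiplying anything out in $\As \otimes \As$, I would record the normalizations $\varphi_1 = \id$ and $c_{1,h} = c_{g,1} = 1$ for all $g,h \in G$. These are forced by the cocycle conditions \eqref{compositors}: the relation $\varphi_g \circ \varphi_h = \Inn_{c_{g,h}} \circ \varphi_{gh}$ at $g = h = 1$ makes $\varphi_1$ idempotent and hence the identity, while specializing the two-cocycle identity at $h = k = 1$ and at $g = h = 1$ yields $c_{g,1}^2 = c_{g,1}$ and $c_{1,k}^2 = c_{1,k}$, whence both equal $1$ by invertibility. These identities are what make the idempotents $\delta_g \otimes 1_A \otimes 1$ multiply cleanly, and they are the real engine of the whole computation.

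Next I would compute $\Delta(1)\big((\delta_g \otimes a \otimes h)\otimes 1\big)$ in $\As \otimes \As$ term by term via the product rule \eqref{str_product}. In the left slot, $(\delta_p \otimes 1_A \otimes 1)(\delta_g \otimes a \otimes h) = \delta(ph,g)(\delta_g \otimes a \otimes h)$, which forces $p = gh^{-1}$; in the right slot, $(\delta_p \otimes 1_A \otimes 1)(\delta_q \otimes 1_A \otimes 1) = \delta(p,q)(\delta_q \otimes 1_A \otimes 1)$ forces $q = p$. Collapsing both Kronecker deltas leaves the single term $(\delta_g \otimes a \otimes h) \otimes (\delta_{gh^{-1}} \otimes 1_A \otimes 1)$, and applying $\epsilon \otimes \id$ with $\epsilon(\delta_g \otimes a \otimes h) = \epsilon_A(a)$ gives the claimed formula for $\epsilon_t$. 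The computation of $\epsilon_s$ is mirror-symmetric: forming $\big(1 \otimes (\delta_g \otimes a \otimes h)\big)\Delta(1)$, the right-slot product $(\delta_g \otimes a \otimes h)(\delta_p \otimes 1_A \otimes 1) = \delta(g,p)(\delta_p \otimes a \otimes h)$ now pins $p = g$, leaving $(\delta_g \otimes 1_A \otimes 1) \otimes (\delta_g \otimes a \otimes h)$, and $\id \otimes \epsilon$ produces $\epsilon_A(a)(\delta_g \otimes 1_A \otimes 1)$.

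For the counital subalgebras I would read off the images. In the formula for $\epsilon_t$ the index $gh^{-1}$ ranges over all of $G$ as $g,h$ vary, so $A^{str}_t$ is spanned by $\{\delta_k \otimes 1_A \otimes 1 : k\in G\}$, and the same spanning set arises for $A^{str}_s$. To identify this subalgebra with $\KK(G)$ I would use the map $\delta_k \mapsto \delta_k \otimes 1_A \otimes 1$: by \eqref{str_product} one has $(\delta_k \otimes 1_A \otimes 1)(\delta_{k'} \otimes 1_A \otimes 1) = \delta(k,k')(\delta_{k'} \otimes 1_A \otimes 1)$, matching the pointwise product $\delta_k\delta_{k'} = \delta(k,k')\delta_k$ of $\KK(G)$, so the map is an algebra isomorphism onto $A^{str}_t$ and onto $A^{str}_s$. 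The computation presents no genuine obstacle beyond careful bookkeeping of the Kronecker deltas; the only place the group-theoretic input enters is the reduction to the normalizations above, without which the idempotents would fail to collapse the sums to single terms.
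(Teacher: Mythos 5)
Your proof is correct and follows essentially the same route as the paper: compute $\Delta(1)$, evaluate $\epsilon_t$ and $\epsilon_s$ directly from their definitions, and identify the counital subalgebras with $\KK(G)$. The only differences are cosmetic improvements — you make explicit the normalizations $\varphi_1=\id$, $c_{g,1}=c_{1,h}=1$ that the paper uses tacitly, and you identify $\As_t$ and $\As_s$ by reading off the image of the explicit formulas rather than, as the paper does, solving $\Delta(b)=\Delta(1)(b\otimes 1)$ by equating coefficients.
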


\begin{proof}
We calculate $\epsilon_t$ on an element $(\delta_g\otimes a \otimes h) \in \As$:
\begin{align*}
\epsilon_t(\delta_g\otimes a \otimes h)& = (\epsilon\otimes \id)(\Delta(1)((\delta_g\otimes a \otimes h)  \otimes 1))\\
& = (\epsilon \otimes \id) ((\delta_g\otimes a \otimes h) \otimes (\delta_{gh^{-1}}\otimes 1_A \otimes 1) )\\
& = \epsilon_A(a)(\delta_{gh^{-1}} \otimes 1_A \otimes 1) \,\,\, . 
\end{align*}
The calculation for the source counital map is completely parallel.
Choose a basis $(a_i)_{i\in I}$ of the algebra $A$ with $a_j = 1_A$ for a fixed $j\in I$, then a general element $b \in \As$ is of the form
\[b = \sum_{g,h\in G, i\in I} \lambda(g,h,i)(\delta_g \otimes a_i \otimes h)\]
with $\lambda(g,h,i) \in \KK$. We have:
\begin{align*}
\Delta (b)& = \sum_{g,h\in G,i\in I} \sum_{(a_i)}\lambda(g,h,i) (\delta_g \otimes (a_i)_{(1)} \otimes h)\otimes(\delta_g \otimes (a_i)_{(2)} \otimes h)\\
\Delta(1)(b\otimes 1) &= \sum_{g,h\in G,i\in I} \lambda(g,h,i) (\delta_g \otimes a_i \otimes h)\otimes(\delta_g \otimes 1_A \otimes h)
\end{align*}
By equating coefficients, we get 
$\lambda(g,h,i) = 0$ for $h\ne 1, i \ne j$ and therefore:
\[\As_t = \langle\delta_g \otimes 1_A \otimes 1, g \in G\rangle 
\cong \KK(G) \,\,\, . \]
An analog calculation shows the same result for $\As_s$.
\end{proof}

\begin{Proposition}
The equivalence $F:A\mod\iso\As\mod$ can be promoted to an equivalence of tensor categories.
\end{Proposition}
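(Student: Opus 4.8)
The plan is to equip $F$ with a tensor structure, i.e.\ to construct natural isomorphisms $\Phi_{M,N}\colon F(M)\bar\otimes F(N)\iso F(M\otimes N)$ together with an isomorphism $\Psi\colon F(\KK)\iso \As_s$ identifying the image of the tensor unit $\KK$ of $A\mod$ with the tensor unit $\As_s$ of $\As\mod$, and then to check the coherence axioms. Since $F$ is already an equivalence of abelian categories by Proposition \ref{prop:equivalence}, this suffices.

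First I would compute the source $F(M)\bar\otimes F(N)=(F(M)\otimes_\KK F(N))\Delta(1)$ explicitly. From the coproduct one has $\Delta(1)=\sum_{g\in G}(\delta_g\otimes 1_A\otimes 1)\otimes(\delta_g\otimes 1_A\otimes 1)$, and a vector $m\otimes k$ is fixed by $(\delta_g\otimes 1_A\otimes 1)$ exactly when $g=k$ (using $c_{k,1}=1$, which follows from the cocycle identity in \eqref{compositors}). Hence $\Delta(1)$ cuts out the span of the diagonal tensors $(m\otimes k)\otimes(n\otimes k)$, so that $F(M)\bar\otimes F(N)\cong(M\otimes_\KK N)\otimes\KK[G]$, which is precisely the underlying space of $F(M\otimes N)$. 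I would then define $\Phi_{M,N}$ by $(m\otimes k)\otimes(n\otimes k)\mapsto(m\otimes n)\otimes k$.

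The core of the argument is to verify that $\Phi_{M,N}$ is a morphism of $\As$-modules. On the left the action is the diagonal action through $\Delta$, and on the right it is the action on $F(M\otimes N)$, where $M\otimes N$ carries the $A$-action through $\Delta_A$. Comparing the two sides reduces to the identity $\Delta_A(\varphi_k(a)c_{k,h})=\sum_{(a)}\varphi_k(a_{(1)})c_{k,h}\otimes\varphi_k(a_{(2)})c_{k,h}$, which holds exactly because $\varphi_k$ is a morphism of Hopf algebras and the $c_{k,h}$ are grouplike, i.e.\ precisely the two extra conditions in Definition \ref{def:Hopf-action}. Naturality in $M$ and $N$ is immediate, as $\Phi$ is the identity on the $M\otimes N$ factor. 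For the unit I would invoke Lemma \ref{counital}: under the identification $\As_s\cong\KK(G)$ the map $\Psi\colon F(\KK)=\KK[G]\to\As_s$, $k\mapsto\delta_k\otimes 1_A\otimes 1$, is a linear isomorphism, and a short computation of the unit action $z.x=\epsilon_s(zx)$ shows that it intertwines the $\As$-actions, using $\epsilon_A(\varphi_k(a))=\epsilon_A(a)$ and $\epsilon_A(c_{k,h})=1$.

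The remaining task, and the step requiring the most care, is the verification of the coherence diagrams. Because in both categories the tensor product sits inside the tensor product of $\KK$-vector spaces and the associativity constraint is inherited from $\mathbf{Vect}_\KK$, the pentagon-type coherence for $\Phi$ collapses to the coassociativity of $\Delta_A$ and is essentially formal. The genuinely delicate point is compatibility with the unit constraints: in the weak Hopf algebra $\As$ the monoidal unit is $\As_s$ and the left and right unitors are built from the counital maps $\epsilon_s,\epsilon_t$ rather than from a strict identification with $\KK$. I would therefore check by direct computation that $\Phi$ and $\Psi$ are compatible with these unitors, which is exactly where the weak (as opposed to ordinary) bialgebra structure must be handled honestly. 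Once these diagrams commute, $(F,\Phi,\Psi)$ is a strong tensor functor, and being an equivalence of underlying abelian categories it is an equivalence of tensor categories.
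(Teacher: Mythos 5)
Your proposal is correct and follows essentially the same route as the paper: you construct the same structure maps (the diagonal identification $F(M)\bar\otimes F(N)\cong (M\otimes N)\otimes\KK[G]$ and the unit isomorphism $F(\KK)\iso\As_s\cong\KK(G)$ via Lemma \ref{counital}), and your reduction of the module-morphism check to $\varphi_k$ being a Hopf algebra morphism and the $c_{k,h}$ being grouplike is exactly the computation the paper leaves implicit. In fact you spell out several verifications (the $\As$-linearity of $\eta_2$ and the unit coherence) in more detail than the published proof does.
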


\begin{proof}
The tensor unit in the representation category of the weak Hopf algebra $\As$ is given by
the source counital subalgebra, which is by lemma \ref{counital} isomorphic to $\KK(G)$. The weak Hopf algebra $\As$ acts on the source counital subalgebra as follows: for an element $(\delta_k \otimes 1_A \otimes 1) \in \As_s$ and $(\delta_g\otimes a \otimes h)\in \As$, we have
\[(\delta_k \otimes 1_A \otimes 1).(\delta_g\otimes a \otimes h) =
\epsilon_s((\delta_k \otimes 1_A \otimes 1)(\delta_g\otimes a \otimes h)) 
= \delta(kh,g)\epsilon_A(a)(\delta_{g} \otimes 1_A \otimes 1)
\,\,\, . \]
The action of an element $a \in A$ on the tensor unit $\KK$ in $A\mod$ is by multiplication with $\epsilon_A(a)$. So we get for the image of the tensor unit under $F$ the vector space $\KK\otimes \KK[G]$ with $\As$-action
\[(\lambda \otimes k).(\delta_g\otimes a \otimes h) = \delta(kh,g)\epsilon_A(a)\lambda\otimes g \,\,\, . \]
We clearly have an isomorphism $F(\mathbf{1}) \to \mathbf{1}$ in $\As\mod$ 
given by 
\[\eta_0:(\lambda \otimes k) \mapsto \lambda \delta_k.\]
Let $M,N \in A\mod$. We have 
\begin{align*}
F(M)\bar \otimes F(N) = \langle (m\otimes g \otimes n \otimes g), m\in M, n\in N, g \in G \rangle.
\end{align*}
Thus the linear map
\[\eta_2(M,N): (m\otimes g\otimes n \otimes g) \mapsto (m\otimes n \otimes g)\]
is an isomorphism $F(M)\bar \otimes F(N) \iso F(M \otimes N)$.
It can be seen to commute with the action of $\As$ and is natural in $M,N$.
Moreover the isomorphisms $\eta_2$ clearly satisfy the coherence axioms for 
three objects. We have therefore established that $(F,\eta_0,\eta_2)$ is a 
tensor functor.
\end{proof}

\subsection{\G-action and \G-grading}\label{equivariance}

We will now define a $G$-equivariant structure on $\As$ that induces a 
$G$-equivariant structure on the category $\As\mod$.
The last step of proving theorem \ref{maintheorem} is then to show that the 
categories $A\mod$ and $\As\mod$ are even equivalent as $G$-equivariant 
categories. 

\begin{Definition}
On the weak Hopf algebra $\As$ we have a strict left action $\varphis$ of 
the group $G$ given by translation in the first factor. Explicitly, an 
element $g'\in G$ acts on an element $(\delta_g\otimes a \otimes h) \in \As$ by
\[\varphis_{g'}(\delta_g \otimes a \otimes h) 
= (\delta_{g'g}\otimes a \otimes h) \,\,\, . \]
\end{Definition}

The strict $G$-action on $\As$ gives us a strict left $G$-action on the category $\As\mod$ by setting $\phi_g(M,\rho) = (M, \rho\circ(\id_M \otimes\varphis_{g^{-1}} )$. We will now establish, that the equivalence $A\mod \cong\As$ is compatible with the $G$-actions. 

\begin{Proposition}
The equivalence $F:A\mod \iso \As\mod$ given in (\ref{equivalence}) respects the $G$-action of the two categories, i.e. for every element $g \in G$ there are natural isomorphisms
\[\psi_g: F( {}^gM) ~\iso~ {}^gF(M)\]
such that for every pair $g,h \in G$ the obvious coherence diagrams of morphisms from $F(^{gh}M)$ to\; $^{gh}F(M)$ commute.
\end{Proposition}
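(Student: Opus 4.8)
The plan is to write down an explicit candidate for $\psi_g$, to check that it is an isomorphism of $\As$-modules, and then to verify naturality and the coherence constraint; both compatibility checks will collapse onto the two defining relations in \eqref{compositors}.

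First I would unwind the two $\As$-module structures carried by the common underlying space $M\otimes\KK[G]$. On $F({}^gM)$ the functor $F$ is built from the twisted $A$-action $m\cdot b=m.\varphi_{g^{-1}}(b)$, so $\delta_{g'}\otimes a\otimes h$ acts by $(m\otimes k)\mapsto\delta(kh,g')\,m.\varphi_{g^{-1}}(\varphi_k(a)c_{k,h})\otimes g'$. On ${}^gF(M)$ the same element acts through $\varphis_{g^{-1}}$, i.e.\ as $\delta_{g^{-1}g'}\otimes a\otimes h$ acting on $F(M)$, giving $(m\otimes k)\mapsto\delta(kh,g^{-1}g')\,m.\varphi_k(a)c_{k,h}\otimes g^{-1}g'$. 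Comparing the two shows that any intertwiner must shift the $\KK[G]$-label by $g^{-1}$ and must absorb the discrepancy between $\varphi_{g^{-1}}\varphi_k$ and $\varphi_{g^{-1}k}$. This dictates the definition
\[\psi_g(M)\colon (m\otimes k)\longmapsto (m.c_{g^{-1},k})\otimes g^{-1}k,\]
which is a linear isomorphism because each $c_{g^{-1},k}$ is invertible and $k\mapsto g^{-1}k$ is a bijection of $G$.

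The main step is to check that $\psi_g(M)$ commutes with the $\As$-action. Applying $\psi_g(M)$ after the action on $F({}^gM)$ produces the $A$-coefficient $\varphi_{g^{-1}}(\varphi_k(a))\,\varphi_{g^{-1}}(c_{k,h})\,c_{g^{-1},kh}$, whereas acting on ${}^gF(M)$ after $\psi_g(M)$ produces $c_{g^{-1},k}\,\varphi_{g^{-1}k}(a)\,c_{g^{-1}k,h}$. Rewriting $\varphi_{g^{-1}}\varphi_k=\Inn_{c_{g^{-1},k}}\circ\varphi_{g^{-1}k}$ turns the first expression into $c_{g^{-1},k}\,\varphi_{g^{-1}k}(a)\,c_{g^{-1},k}^{-1}\varphi_{g^{-1}}(c_{k,h})\,c_{g^{-1},kh}$; after cancelling the common left factor $c_{g^{-1},k}\varphi_{g^{-1}k}(a)$ (evaluate at $a=1_A$ and use invertibility) the two agree precisely when $\varphi_{g^{-1}}(c_{k,h})\,c_{g^{-1},kh}=c_{g^{-1},k}\,c_{g^{-1}k,h}$, which is the cocycle relation in \eqref{compositors} with arguments $(g^{-1},k,h)$. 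Naturality in $M$ is then immediate: $F(f)=f\otimes\id$ for $f\in\Hom_A(M,N)$, passing through ${}^g(-)$ leaves the underlying linear map unchanged, and right multiplication by the fixed elements $c_{g^{-1},k}$ commutes with any $A$-module map.

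Finally I would verify the coherence constraint. Since the $G$-action on $\As\mod$ is strict, the only nontrivial compositor is the one on $A\mod$ from Lemma \ref{action}, namely $\alpha_{g,h}(M)\colon {}^g({}^hM)\iso {}^{gh}M$ given by right multiplication by $c_{h^{-1},g^{-1}}$. The required identity $\psi_{gh}(M)\circ F(\alpha_{g,h}(M))={}^g\psi_h(M)\circ\psi_g({}^hM)$ unwinds to the equality of coefficients $c_{h^{-1},g^{-1}}\,c_{h^{-1}g^{-1},k}=\varphi_{h^{-1}}(c_{g^{-1},k})\,c_{h^{-1},g^{-1}k}$, which is again \eqref{compositors}, now with arguments $(h^{-1},g^{-1},k)$. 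I expect the genuine obstacle to be bookkeeping rather than anything conceptual: one must keep straight that the twist in $\psi_g({}^hM)$ is computed with the twisted action of ${}^hM$ (hence carries an extra $\varphi_{h^{-1}}$), and that passing a morphism through ${}^g(-)$ does not alter its underlying linear map. Once these conventions are pinned down, both the module-intertwining property and the coherence reduce cleanly to the inner-automorphism and cocycle axioms, which together establish that $F$ respects the $G$-action in the sense required.
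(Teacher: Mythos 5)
Your proposal is correct and follows essentially the same route as the paper: the same formula $\psi_g(m\otimes k)=(m.c_{g^{-1},k}\otimes g^{-1}k)$, the same intertwining computation via $\varphi_{g^{-1}}\circ\varphi_k=\Inn_{c_{g^{-1},k}}\circ\varphi_{g^{-1}k}$ and the cocycle relation. You even make explicit the coherence check (reducing it to the cocycle identity with arguments $(h^{-1},g^{-1},k)$, including the extra $\varphi_{h^{-1}}$ coming from the twisted action on ${}^hM$), which the paper only sketches.
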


\begin{proof}
For $M \in A\mod$ consider the linear map $\psi_g: M \otimes \KK[G] \to M \otimes \KK[G]$ defined by
\begin{align*}
\psi_g: (m\otimes k) \mapsto (m.c_{g^{-1},k}\otimes g^{-1}k). 
\end{align*}

We first show that $\psi_g$ is a morphism of $\As$-modules. To distinguish the actions on the different modules we use the notation ``$\star$'' for the $\As$-action on $F(^g M)$ and $^g F(M)$, ``$\odot$'' for the action on $F(M)$ and ``$.$'' for the $A$-action on $M$.
\begin{align*}
\psi_g\big( &(m \otimes k) \star (\delta_x \otimes a \otimes h) \big) \\
&= \psi_g\big( \delta(kh,x) (m.\varphi_{g^{-1}}(\varphi_{k}\big(a)c_{k,h}\big) \otimes x) \big) \\
&= \delta(kh,x)~ m.\varphi_{g^{-1}}(\varphi_{k}\big(a)c_{k,h}\big)c_{g^{-1}, kh} \otimes g^{-1}kh \\
&= \delta(kh,x) ~ m.c_{g^{-1},k}\varphi_{g^{-1}k}(a)(c_{g^{-1},k})^{-1} \varphi_{g^{-1}}(c_{k,h}) c_{g^{-1}, kh} \otimes g^{-1}kh \\
&= \delta(g^{-1}kh,g^{-1}x) ~ m.c_{g^{-1},k}\varphi_{g^{-1}k}(a)c_{g^{-1}k,h}  \otimes g^{-1}kh  \\
&= (m.c_{g^{-1},k} \otimes g^{-1}k) \odot(\delta_{g^{-1}x} \otimes a \otimes h) \\
&= \psi_g(m \otimes k) \star (\delta_x \otimes a \otimes h) \,\,\, .
\end{align*}
Moreover we have to verify that the $\psi_g$ satisfy a coherence condition for two indices $g$ and $h$. This condition can be checked similarly to the above 
computation, using the cocycle condition for the $c_{g,h}$.
\end{proof}

\begin{Definition}
We define a $G$-grading in the sense of \ref{GHopf} on the algebra $\As$ by:
\begin{align}
(\As)_h =  \bigoplus_{g \in G} \left(\KK(\delta_g)\otimes A_{g^{-1}hg}\right) 
\otimes \KK[G] \,\,\, . 
\end{align}
\end{Definition}

The following Lemma can then be checked directly.
\begin{Lemma}
The algebra $\As$ is a $G$-weak Hopf algebra with strict $G$-action, i.e a weak Hopf algebra with strict $G$-action and compatible $G$-grading.
\end{Lemma}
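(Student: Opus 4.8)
The plan is to verify, one at a time, the defining conditions of a $G$-weak Hopf algebra with strict $G$-action, i.e.\ the weak-Hopf-algebra analogue of Definition \ref{GHopf} together with Remark \ref{rem:weak-Hopf-action}. By the preceding propositions we already know that $\As$ is a weak Hopf algebra, that $\varphis$ is a left $G$-action on the underlying algebra, and that a $G$-grading has been written down. Translation in the $\KK(G)$-factor composes strictly, $\varphis_{g'}\varphis_{g''}=\varphis_{g'g''}$, so the action is strict (all compositors of the $\As$-action are trivial). Thus four things remain: that each $\varphis_{g'}$ is an automorphism of the weak Hopf algebra $\As$ (not merely of the algebra); that the decomposition $\As=\bigoplus_h(\As)_h$ is a grading of the \emph{algebra}, with the components as subalgebras and $\As$ their direct sum as an algebra (the first bullet of Definition \ref{GHopf}); that the action is compatible with the grading, $\varphis_{g'}\big((\As)_h\big)\subseteq(\As)_{g'hg'^{-1}}$; and that the coproduct respects the grading, $\Delta\big((\As)_h\big)\subseteq\bigoplus_{pq=h}(\As)_p\otimes(\As)_q$.

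The first and the last two points are quick bookkeeping. Since $\varphis_{g'}$ only relabels the $\KK(G)$-index, while $\Delta$, $\epsilon$ and $S$ all act diagonally on that index (and $\epsilon$ only through the $A$-factor), one reads off from the explicit formulas that $\varphis_{g'}$ commutes with $\Delta$, $\epsilon$ and $S$; hence it is a weak Hopf algebra automorphism. For the action compatibility, translation of the $\KK(G)$-index by $g'$ conjugates the grading degree by $g'$, which is exactly the required $h\mapsto g'hg'^{-1}$. For the coproduct compatibility, I would use that $\Delta_A$ respects the grading of $A$, i.e.\ $\Delta_A(A_m)\subseteq\bigoplus_{pq=m}A_p\otimes A_q$, together with the fact that $\Delta$ keeps the $\KK(G)$- and $\KK[G]$-indices diagonal, so a factorisation $m=pq$ of the $A$-degree induces the corresponding factorisation of the $\As$-degree.

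The main obstacle is the second point, that the product \eqref{str_product} of two homogeneous elements is again homogeneous of the product degree (and vanishes otherwise). The subtlety is that \eqref{str_product} twists the second $A$-factor by $\varphi_{h}$ and inserts the compositor $c_{h,h'}$, whereas the degree of $\delta_g\otimes a\otimes h$ is obtained from $\deg_A a$ by conjugation in $G$. I would therefore track the degree carefully: using $\varphi_h(A_m)\subseteq A_{hmh^{-1}}$ and the grading behaviour of the grouplike elements $c_{h,h'}$, one computes the $A$-degree of $a\,\varphi_h(a')\,c_{h,h'}$, places the result in the $\delta_{g'}$-slot forced by the Kronecker constraint $gh'=g'$, and checks that the resulting $\As$-degree is the product of the two input degrees.

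I expect this last verification to be where all the care is needed, since the non-commutativity of $G$ means the conjugations contributed by the two factors must be shown to combine correctly: the constraint $gh'=g'$ and the $\varphi_h$-twist have to conspire so that the conjugating group elements telescope to the single conjugation defining the product degree (and so that a mismatch of degrees forces the product to vanish, giving the direct-sum-of-algebras structure). Once the degrees are seen to match on homogeneous generators, linearity together with the already-established associativity of \eqref{str_product} finishes this step, and collecting the four verified properties yields that $\As$ is a $G$-weak Hopf algebra with strict $G$-action.
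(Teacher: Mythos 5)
Your checklist is the right one, and since the paper disposes of this Lemma with ``can then be checked directly'', an explicit verification is exactly what is called for; the routine points you list (that $\varphis_{g'}$ commutes with $\Delta$, $\epsilon$ and $S$ because all of these act diagonally in the $\KK(G)$-index, that the action composes strictly and conjugates the degree by $g'$, and that $\Delta$ respects the grading because $\Delta_A$ does) all go through as you describe. But the step you single out as the delicate one is where the argument genuinely breaks, and not merely for lack of care: the conjugating elements do \emph{not} telescope for the grading as defined in the paper. The product \eqref{str_product} of $\delta_g\otimes a\otimes h$ and $\delta_{g'}\otimes a'\otimes h'$ with $a\in A_m$, $a'\in A_{m'}$ is nonzero only if $g'=gh'$ and $m=hm'h^{-1}$ (the compositor $c_{h,h'}$ has an invertible component in every $A_n$, since $A$ is the direct sum of the $A_n$ as an algebra, so it imposes no further constraint), and then it lies in $\KK\delta_{g'}\otimes A_m\otimes\KK[G]$. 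Under these constraints the two factors have $\As$-degrees $gmg^{-1}$ and $g'm'g'^{-1}=gh'h^{-1}mhh'^{-1}g^{-1}$, which agree only when $h'h^{-1}$ centralizes $m$. Concretely, for $G=S_3$ and $A=\KK(G)$ with $A_x=\KK\delta_x$ and the conjugation action (a perfectly good $G$-Hopf algebra, even with strict action and trivial compositors) one has $(\delta_e\otimes\delta_{(12)}\otimes e)\cdot(\delta_{(13)}\otimes\delta_{(12)}\otimes(13))=\delta_{(13)}\otimes\delta_{(12)}\otimes(13)\neq 0$, although the first factor lies in $(\As)_{(12)}$ and the second in $(\As)_{(23)}$. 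So the stated components do not make $\As$ a direct sum of algebras, and the first bullet of Definition \ref{GHopf} fails.

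Two consequences for your write-up. First, the target you formulate for this step, ``homogeneous of the product degree'', is in any case not what Definition \ref{GHopf} requires: the convention there is that distinct homogeneous components annihilate one another and each component is a subalgebra, so the product of two degree-$h$ elements must again have degree $h$, not $h^2$; you do say ``direct-sum-of-algebras structure'' later, but the two formulations are inconsistent and only the latter is the actual condition to be checked. Second, and more importantly, to make the Lemma true the degree must see the $\KK[G]$-index as well: declare $\delta_g\otimes a\otimes k$ with $a\in A_m$ to have degree $(gk^{-1})m(gk^{-1})^{-1}$, i.e.\ $(\As)_n=\bigoplus_{g,k\in G}\KK\delta_g\otimes A_{kg^{-1}ngk^{-1}}\otimes\KK k$. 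With this grading the constraints $g'=gh'$ and $m=hm'h^{-1}$ force all three conjugators (of the two factors and of the product, whose third tensor slot is $hh'$) to collapse to $gh^{-1}$, products of distinct degrees vanish, and the remaining compatibilities --- with $\Delta$, with the strict action, and the statement $F\big((A\mod)_h\big)\subset(\As\mod)_h$ --- still hold, since the homogeneous components of the unit are unchanged. Your plan is salvageable only after this correction; as written, the telescoping you anticipate does not occur.
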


Note that the grading on $A$ resp. $\As$ gives a grading on the representation category by $(A\mod)_h := A_h\mod$ resp. $(\As\mod)_h = (\As)_h\mod$.
\begin{Proposition}
The equivalence $F:A\mod \iso \As\mod$ given in (\ref{equivalence}) respects the $G$-grading of the two categories, i.e. for every element $h \in G$, we have
\[F((A\mod)_h) \subset (\As\mod)_h.\]
\end{Proposition}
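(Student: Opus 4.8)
The plan is to reduce the grading statement to a single idempotent identity and then verify it by a direct computation with the module action. Recall that for the $G$-Hopf algebra $A$ the algebra splits as a direct sum of its homogeneous components, so each block $A_d$ has its own unit $1_{A_d}$, with $\sum_{d\in G}1_{A_d}=1_A$ and $1_{A_d}1_{A_{d'}}=\delta(d,d')1_{A_d}$. Under the induced decomposition $A\mod=\prod_d A_d\mod$, the component $(A\mod)_h=A_h\mod$ is exactly the full subcategory of those $M$ with $M.1_{A_h}=M$. By the previous Lemma the grading exhibits $\As$ as a direct sum of algebras as well, so the block units $E_h:=1_{(\As)_h}$ are orthogonal idempotents with $\sum_h E_h=1$, and $N$ lies in $(\As\mod)_h=(\As)_h\mod$ if and only if $N.E_h=N$. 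Thus the claim $F((A\mod)_h)\subset(\As\mod)_h$ is equivalent to the single identity $F(M).E_h=F(M)$ for every $M\in A_h\mod$.

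First I would pin down $E_h$ explicitly. Using the displayed grading on $\As$, the degree-$h$ part of the unit $1=\sum_g\delta_g\otimes 1_A\otimes 1$ is obtained by keeping, for each $g$, only the block of $1_A=\sum_d 1_{A_d}$ lying in $A_{g^{-1}hg}$, so
\[E_h=\sum_{g\in G}\delta_g\otimes 1_{A_{g^{-1}hg}}\otimes 1.\]
A quick check confirms the bookkeeping: as $h$ runs over $G$ the exponent $g^{-1}hg$ runs over all of $G$ for fixed $g$, whence $\sum_h E_h=\sum_g\delta_g\otimes 1_A\otimes 1=1$, and the Kronecker delta in the product shows the $E_h$ are orthogonal idempotents, so it genuinely suffices to prove $F(M).E_h=F(M)$.

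The core computation is then short. For $M\in A_h\mod$ and a generator $m\otimes k$ of $F(M)=M\otimes\KK[G]$, I would apply the action formula, noting that only the summand $g=k$ survives the delta $\delta(k\cdot 1,g)$:
\begin{align*}
(m\otimes k).E_h
&=\sum_{g\in G}\delta(k,g)\,\bigl(m.\varphi_k(1_{A_{g^{-1}hg}})\,c_{k,1}\otimes g\bigr)\\
&=m.\varphi_k(1_{A_{k^{-1}hk}})\,c_{k,1}\otimes k\\
&=m.1_{A_h}\otimes k=m\otimes k.
\end{align*}
Two elementary inputs close the gap between the second and third lines. The cocycle conditions give the normalisation $c_{k,1}=1$ (put $h=k=1$ in the cocycle identity and use $c_{1,1}=1$ and invertibility). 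And since each $\varphi_k$ is an algebra automorphism of $A$ carrying the block $A_{k^{-1}hk}$ onto $A_{k(k^{-1}hk)k^{-1}}=A_h$, it sends block-unit to block-unit, i.e.\ $\varphi_k(1_{A_{k^{-1}hk}})=1_{A_h}$; finally $m.1_{A_h}=m$ because $M$ lives in the $h$-block. Hence $F(M).E_h=F(M)$, as desired.

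I expect the only genuinely delicate point to be the two identifications feeding the calculation: writing down $E_h$ correctly and, above all, the equality $\varphi_k(1_{A_{k^{-1}hk}})=1_{A_h}$. The latter is exactly where the compatibility $\varphi_k(A_d)\subset A_{kdk^{-1}}$ of the $G$-action with the grading enters, together with the fact that the invertible maps $\varphi_k$ permute the blocks and therefore preserve their units. Once these are in place, the remainder is a one-line application of the module action together with the normalisation $c_{k,1}=1$ of the compositors.
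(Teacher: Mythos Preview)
Your proof is correct and follows essentially the same approach as the paper's own argument: both identify the block idempotent $E_h=\sum_{g}\delta_g\otimes 1_{A_{g^{-1}hg}}\otimes 1$ and verify directly that it acts as the identity on $F(M)=M\otimes\KK[G]$ via the same computation. Your write-up is in fact slightly more explicit than the paper's in justifying $c_{k,1}=1$ and $\varphi_k(1_{A_{k^{-1}hk}})=1_{A_h}$, but the method is identical.
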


\begin{proof}
Let $M \in (A\mod)_h = A_h\mod$. We know that the action by the unit 
$(\unit_{A})_h$ of $A_h$ is the identity on $M$.
We need to show, that the $h$-component of the unit in $\As$, which is $\unit_{h} = \sum_{g\in G} (\delta_g \otimes (1_A)_{g^{-1}hg}\otimes 1)$, acts as an idempotent on $F(M)=M\otimes\KK[G]$. In fact it even acts as the identity: for any element of the form $(m\otimes k)$, we have:
\begin{align*}
(m\otimes k). \left(\sum_{g\in G} \delta_g \otimes (1_A)_{g^{-1}hg}\otimes 1\right)
&= \sum_{g\in G} (\delta(k,g)m.\varphi_k( (1_A)_{g^{-1}hg})c_{k,1}\otimes k)\\
&= (m.\varphi_k( (1_A)_{k^{-1}hk})\otimes k)\\
&= (m.(1_A)_{h}\otimes k)\\
& = (m\otimes k)
\end{align*}
where in the first equality we used the definition of the action of $\As$ 
on $M\otimes \KK[G]$ given in (\ref{action_of_Astr}), in the third equality 
the fact that $G$ acts by unital algebra morphisms and in the last equality
that $M$ is in the $h$-component of $A\mod$.\\
So we have $F(M).\unit_h = F(M)$; therefore $F(M)$ lies in the component 
$(\As\mod)_h$.
\end{proof}

\section{Equivariant R-Matrix and ribbon-element}

In \cite{mns2011} we considered $G$-equivariant categories with a $G$-braiding and a $G$-twist as additional data ($G$-ribbon categories). For the definition see \cite{turaev2010,kirI17}. Since those categories were our main motivation to study the strictification in terms of algebras, we want to say a few words about the $G$-ribbon structure.

The definition of a $G$-equivariant $R$-matrix is rather involved even in the 
strict Hopf algebra case. We will refrain here from stating the axioms for it 
explicitly, but we will instead make an equivalent definition:

\begin{Definition}\label{def:ribbon}
Let $A$ be a $G$-(weak) Hopf algebra. 
\begin{enumerate}
\item
A \emph{$G$-equivariant $R$-matrix} is an element $R = R_1 \otimes R_2 \in \Delta^{op}(1)(A\otimes A)\Delta(1)$ such that for $V\in (A\mod)_g$, $W\in A\mod$, the map
\begin{align*}
c_{VW}:~ &V\otimes W ~\to~ {}^{g}W \otimes V\\
& ~v\otimes w ~\mapsto~ w.R_2 \otimes v.R_1
\end{align*}
is a $G$-braiding, in particular a morphism of $A$-modules.
\item A \emph{$G$-twist} is an invertible element $\theta \in A$ such that for every object $V\in (A\mod)_g$ the induced map
\begin{align*}
\theta_V:~&V ~ \to ~{}^{g}V\\
& ~v ~\mapsto~ v.\theta^{-1}
\end{align*}

is a $G$-twist in $A\mod$.
\end{enumerate}
A $G$-\emph{(weak) ribbon-algebra} is a $G$-(weak) Hopf algebra $A$ with a $G$-equivariant $R$-matrix and a $G$-twist.
\end{Definition}

\begin{Lemma}\label{le:ribbon-ribbon}
A $G$-weak Hopf algebra $A$ can be endowed with the structure of a $G$-weak ribbon algebra if and only if the representation category $A\mod$ 
has the structure of a $G$-ribbon category.
\end{Lemma}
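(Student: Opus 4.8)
The plan is to prove Lemma \ref{le:ribbon-ribbon} as a biconditional by exploiting the fact that, thanks to Definition \ref{def:ribbon}, a $G$-equivariant $R$-matrix and a $G$-twist are \emph{defined} to be precisely those algebraic data whose induced maps $c_{VW}$ and $\theta_V$ furnish a $G$-braiding and a $G$-twist on $A\mod$. So the lemma is really a reconstruction statement: every categorical $G$-braiding/$G$-twist on $A\mod$ is induced by such an element, and conversely. The forward direction (algebra structure $\Rightarrow$ categorical structure) is immediate from the definition, so the substance is the reverse direction.

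First I would treat the twist, which is the cleaner case. Given a $G$-twist on $A\mod$, i.e.\ a natural family of isomorphisms $\theta_V:V\to {}^{g}V$ for $V\in(A\mod)_g$ satisfying the $G$-twist coherence axioms, I would invoke Lemma \ref{hilfslemma}(\ref{punktzwei}) (or rather its weak-Hopf-algebra analogue): natural endomorphisms of the forgetful/identity functor on $A\mod$ are represented by elements of $A$ acting by right multiplication. The family $\{\theta_V\}$, being natural in $V$, is represented by a single element $\theta\in A$ via $v\mapsto v.\theta^{-1}$; naturality across \emph{all} modules pins $\theta$ down uniquely, and invertibility of each $\theta_V$ forces $\theta\in A^\times$. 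Then the categorical twist axioms translate term by term into the defining equations of a $G$-twist element in Definition \ref{def:ribbon}(2). The only mild care needed is to check that the grading-shift target ${}^{g}V$ is handled correctly, i.e.\ that $\theta$ is genuinely a single element of $A$ and not a $g$-dependent family; this follows because the twist is a \emph{natural} transformation of the identity functor composed with the grading shift, and the shift is encoded by the $G$-action already built into $A$.

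For the $R$-matrix the argument is parallel but more delicate. A $G$-braiding is a natural family $c_{VW}:V\otimes W\to {}^{g}W\otimes V$, natural in both arguments. I would represent this family by an element of (a suitable completion of) $A\otimes A$: naturality in both variables, together with the reconstruction principle for $\End(\otimes)$ on $A\mod$, forces $c_{VW}$ to act by right multiplication by a fixed tensor $R=R_1\otimes R_2$, giving $v\otimes w\mapsto w.R_2\otimes v.R_1$. The normalization $R\in\Delta^{op}(1)(A\otimes A)\Delta(1)$ demanded in Definition \ref{def:ribbon}(1) is exactly what is needed so that $c_{VW}$ lands in the truncated tensor product $M\bar\otimes N=(M\otimes N)\Delta(1)$ of the weak Hopf algebra rather than the naive one; this is the point where the weak (non-unital coproduct) structure genuinely enters and must be tracked. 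The hexagon/braiding coherence axioms for $c_{VW}$ then translate into the quasi-triangularity-type relations, and the property that each $c_{VW}$ is a morphism of $A$-modules becomes the intertwining condition on $R$.

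The main obstacle I expect is the reverse direction for the $R$-matrix, specifically establishing that an arbitrary natural $G$-braiding is represented by an element lying in the correct subspace $\Delta^{op}(1)(A\otimes A)\Delta(1)$ rather than in a larger space, and verifying that the weak-bialgebra reconstruction of $\End(\otimes)$ behaves well enough to make this identification canonical. For an honest Hopf algebra this is the standard Drinfel'd reconstruction of the universal $R$-matrix, but here $A$ is a $G$-(weak) Hopf algebra, so I would either cite the appropriate weak-bialgebra reconstruction results (as in \cite{nikshych2003invariants}) and check compatibility with the $G$-grading shift, or argue directly that $c_{VW}$ acts by the image of a single tensor after restricting to the truncated product $\bar\otimes$. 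Once the correspondence between categorical data and algebraic elements is set up compatibly with the grading and the $G$-action, matching the two sets of axioms is a routine, if lengthy, bookkeeping exercise that I would not spell out in full.
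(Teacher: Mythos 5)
Your overall strategy --- forward direction immediate from Definition \ref{def:ribbon}, reverse direction by reconstructing the elements $R$ and $\theta$ from the categorical data --- matches the paper's. But you leave the decisive step unproved: you name the representability of the natural families $c_{VW}$ and $\theta_V$ by fixed elements of $A\otimes A$ and $A$ as ``the main obstacle'' and defer it to unspecified weak-bialgebra reconstruction results or to a direct argument you do not supply. Moreover, the tool you do invoke for the twist, Lemma \ref{hilfslemma}(\ref{punktzwei}), does not apply here: that lemma is stated and proved only for the complex group algebra of a finite \emph{abelian} group, where $\End(\Id)\cong Z(A)=A$; for a general (weak) Hopf algebra the natural endomorphisms of the identity functor form only the centre of $A$, and in any case $\theta_V:V\to{}^{g}V$ is a twisted intertwiner, satisfying $\theta_V(v.a)=\theta_V(v).\varphi_{g^{-1}}(a)$, so it is not a natural endomorphism of the identity functor and $\End(\Id)$ is the wrong object to consider.

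The gap closes with an elementary Yoneda-type argument, which is exactly what the paper does and which you almost articulate in your final sentence: evaluate the categorical data on the regular representation. Set $R=\tau\circ c_{A,A}(1_A\otimes 1_A)$ and $\theta=\theta_A(1_A)^{-1}$. For $v\in V$ and $w\in W$ let $\bar v:A\to V$ and $\bar w:A\to W$ be the $A$-linear maps with $\bar v(1_A)=v$ and $\bar w(1_A)=w$; naturality of $c$ and $\theta$ with respect to these morphisms yields $c_{V,W}(v\otimes w)=\tau\bigl((v\otimes w).R\bigr)$ and $\theta_V(v)=v.\theta^{-1}$ for all modules simultaneously, with no completion of $A\otimes A$ and no reconstruction theorem required. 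The membership $R\in\Delta^{op}(1)(A\otimes A)\Delta(1)$ --- the point you flagged as delicate --- comes for free from the fact that $c_{A,A}$ is a map between the truncated tensor products $\bar\otimes$ of the weak Hopf algebra. Once the elements are produced this way, the axioms of Definition \ref{def:ribbon} hold by construction, since that definition is phrased entirely in terms of the induced maps being a $G$-braiding and a $G$-twist on $A\mod$. So your proposal locates the right issue but does not resolve it; the resolution is simpler and more elementary than the reconstruction machinery you propose to import.
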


\begin{proof}
If $A$ is a $G$-ribbon algebra, it follows from the definition that $A\mod$ 
is a $G$-ribbon category. If on the other hand, $A\mod$ is a $G$-ribbon 
category with $G$-braiding $c$ and $G$-twist $\theta$, define an $R$-matrix 
and a twist of $A$ by
\begin{equation}
R = \tau\circ c_{A,A}(1_A\otimes 1_A)\quad\mbox{ and }\quad
\theta = \theta_A(1)^{-1} \,\,\, . 
\end{equation}
For $v\in V$, $w\in W$ let $\bar v:A \to V$, $\bar w:A \to W$ be the $A$-linear maps with $\bar v(1_A) = v, \bar w(1_A) = w$.
We then have
\begin{equation*}
 \tau((v\otimes w).R) =   \tau(\bar v \otimes \bar w(R)) = (\bar w \otimes \bar v) c_{A,A}(1_A\otimes 1_A)
 = c_{V,W}(v\otimes w)  \,\,\, , 
\end{equation*}
\begin{equation*}
v.\theta^{-1} =   v.(\theta_A(1_A)) = \bar v(\theta_A(1_A)) = \theta_V \bar v(1_A) = \theta_V(v).
\end{equation*}
Thus $R$ and $\theta$ satisfy the conditions of definition \ref{def:ribbon} by construction.
\end{proof}

As an immediate consequence of lemma \ref{le:ribbon-ribbon}, we have:

\begin{Kor}
If $A$ is a $G$-ribbon algebra, the strictification algebra $\As$ inherits the structure of a $G$-weak ribbon algebra such that the equivalence $F:A\mod \to \As\mod$ is an equivalence of $G$-ribbon categories.
\end{Kor}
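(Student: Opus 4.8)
The plan is to prove the corollary by combining the main strictification theorem (Theorem \ref{maintheorem}) with the intrinsic characterization of ribbon structures provided by Lemma \ref{le:ribbon-ribbon}. The starting hypothesis is that $A$ is a $G$-ribbon algebra, so by the ``only if'' direction of Lemma \ref{le:ribbon-ribbon}, the category $A\mod$ carries the structure of a $G$-ribbon category. The key observation is that all the structure we need on $\As$ should be \emph{transported} across the equivalence $F$ rather than constructed by hand from $R$ and $\theta$ directly; this keeps the argument short and avoids a second round of explicit computations analogous to those in Section \ref{weakHopf}.

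First I would invoke Theorem \ref{maintheorem} together with the propositions of Section \ref{equivariance} to record that $F:A\mod\iso\As\mod$ is an equivalence of $G$-equivariant tensor categories with dualities. The point is that a $G$-braiding and a $G$-twist are additional pieces of data defined purely in terms of the tensor product, the $G$-action, and the grading, all of which $F$ preserves by the results already established. Therefore I would transport the $G$-braiding $c$ and the $G$-twist $\theta$ of $A\mod$ along $F$ to obtain a $G$-braiding $c^{str}$ and a $G$-twist $\theta^{str}$ on $\As\mod$; because $F$ is an equivalence respecting exactly the structures in the definition of a $G$-ribbon category, the transported data again satisfy all the axioms, so $\As\mod$ is a $G$-ribbon category and $F$ is by construction an equivalence of $G$-ribbon categories.

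Having obtained that $\As\mod$ is a $G$-ribbon category, I would apply the ``if'' direction of Lemma \ref{le:ribbon-ribbon} to the $G$-weak Hopf algebra $\As$ (which is a $G$-weak Hopf algebra by the Lemma of Section \ref{equivariance}). This produces a $G$-equivariant $R$-matrix and a $G$-twist on $\As$ itself, exactly via the formulas $R=\tau\circ c^{str}_{\As,\As}(1\otimes 1)$ and $\theta=\theta^{str}_{\As}(1)^{-1}$, thereby endowing $\As$ with the structure of a $G$-weak ribbon algebra. Combining the two halves, $F$ intertwines the ribbon data on both sides and is therefore an equivalence of $G$-ribbon categories, which is the assertion of the corollary.

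The main obstacle I anticipate is the verification that transporting a $G$-braiding along the tensor equivalence $F$ genuinely yields a $G$-braiding in the equivariant sense, rather than merely an ordinary braiding. A $G$-braiding involves the natural isomorphisms $c_{V,W}:V\otimes W\to {}^{g}W\otimes V$ for $V\in(A\mod)_g$, so the transport must be compatible with both the grading preservation $F((A\mod)_h)\subset(\As\mod)_h$ and the equivariance isomorphisms $\psi_g:F({}^gM)\iso{}^gF(M)$; one must check that the transported morphisms land in the correct graded-and-twisted hom-spaces and satisfy the $G$-equivariant hexagon-type coherences. Since each of these compatibilities has, however, already been established for $F$ in the propositions of Sections \ref{weakHopf} and \ref{equivariance}, this step reduces to assembling those facts rather than a fresh computation, and the only genuine content is bookkeeping the interaction of $\psi_g$ with the braiding, which is routine given the cocycle identity already used there.
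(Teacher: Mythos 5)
Your proposal is correct and follows essentially the same route as the paper, which presents the corollary as an immediate consequence of Lemma \ref{le:ribbon-ribbon}: transport the $G$-ribbon structure of $A\mod$ along the already-established equivalence $F$ of $G$-equivariant tensor categories, then apply the converse direction of the lemma to the $G$-weak Hopf algebra $\As$ to obtain the $R$-matrix and twist on $\As$ itself. Your additional remarks on checking compatibility of the transported braiding with the grading and the isomorphisms $\psi_g$ are a reasonable elaboration of what the paper leaves implicit.
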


\appendix
\section{Table summarizing terminology}
The following table summarizes the terminology
for Hopf algebras with an action of a finite group $G$
and their weakenings. We consider two types of weakenings: a weakening of the 
$G$-action corresponding to the two rows of
the table, and a weakening of the unitality of the coproduct,
corresponding to the two columns of the table.

Each square contains three different entries, depending
on additional structure on the Hopf algebra. The objects in 1.\
only have the $G$-action and no additional structure (see Definition 
\ref{def:Hopf-action}). The objects in 2.\ are equipped with a $G$-grading 
with the compatibilities introduced in Definition 
\ref{GHopf}. The objects in 3.\ have, in addition to the 
$G$-equivariant structure, a  $G$-equivariant $R$-matrix and a $G$-twist 
as introduced in Definition \ref{def:ribbon}.\\

\makebox[\linewidth]{
\begin{tabular}{l|c|c}
& Hopf algebra & weak Hopf algebra\\
\hline
strict $G$-action &
\begin{minipage}{5.5cm}
\begin{enumerate}
\item
Hopf algebra with strict $G$-action
\item
$G$-Hopf algebra with strict $G$-action
\item
$G$-ribbon algebra with strict $G$-action \phantom{g}
\end{enumerate}
\end{minipage}
 &
\begin{minipage}{5.5cm}
\begin{enumerate}
\item
weak Hopf algebra with strict $G$-action
\item
$G$-weak Hopf algebra with strict $G$-action
\item
$G$-weak ribbon algebra with strict $G$-action \phantom{g}
\end{enumerate}
\end{minipage}
\\
\hline
weak $G$-action & 
\begin{minipage}{5.5cm}
\begin{enumerate}
\item
Hopf algebra with weak $G$-action
\item
$G$-Hopf algebra
\item
$G$-ribbon algebra
\end{enumerate}
\end{minipage}
 &
\begin{minipage}{5.5cm}
\begin{enumerate}
\item
weak Hopf algebra with weak $G$-action
\item
$G$-weak Hopf algebra 
\item
$G$-weak ribbon algebra\phantom{g}
\end{enumerate}
\end{minipage}
\end{tabular}}\\


\bibliographystyle{alpha}
\bibliography{As}{}

\end{document}